\title{Robust Remanufacturing Planning with Parameter Uncertainty }
\author[1]{Zhicheng Zhu}
\author[1]{Yisha Xiang}
\author[2]{Ming Zhao}
\author[1]{Yue Shi}
\affil[1]{Department of Industrial, Manufacturing, and Systems Engineering, Texas Tech University, Lubbock, TX, USA}
\affil[2]{Department of Business Administration, University of Delaware, Newark, DE, USA }
\date{}
\begin{document}
\maketitle
\begin{abstract}
	We consider the problem of remanufacturing planning in the presence of statistical estimation errors. Determining the optimal remanufacturing timing, first and foremost, requires modeling of the state transitions of a system. The estimation of these probabilities, however, often suffers from data inadequacy and is far from accurate, resulting in serious degradation in performance. To mitigate the impacts of the uncertainty in transition probabilities, we develop a novel data-driven modeling framework for remanufacturing planning in which decision makers can remain robust with respect to statistical estimation errors. We model the remanufacturing planning problem as a robust Markov decision process, and construct ambiguity sets that contain the true transition probability distributions with high confidence. We further establish structural properties of optimal robust policies and insights for remanufacturing planning. A computational study on the NASA turbofan engine shows that our data-driven decision framework consistently yields better worst-case performances and higher reliability of the performance guarantee.

\end{abstract}
 \keywords {remanufacturing planning, robust Markov decision process, control-limit policy, data-driven solutions
}
\section{Introduction}\label{intro}	
The manufacturing industry is a major consumer of materials and energy and imposes a significant impact on environment. Manufacturing activities are responsible for approximately 31\% of the United States’ total energy usage and 19\% of the world’s greenhouse gas emissions within the industrial sector \citep{diaz2010machine, faludi2015comparing}. Sustainable manufacturing with improved environmental performance has drawn a great attention from governments, companies and scientific communities. In the past decade, remanufacturing has emerged as one of the critical elements for developing a sustainable manufacturing industry \citep{ijomah2007development}.  Comparing to manufacturing a new product, remanufacturing   can reduce up to 80\% of energy consumption and carbon dioxide emissions \citep{sutherland2008comparison}, and 40-65\% of manufacturing costs \citep{shi2014product,ford2016additive}. The proceedings of the G7 Alliance on Resource Efficiency noted remanufacturing as one of the most important drivers for getting to a closed-loop economy and improving resource efficiency \citep{EPA}. The growing remanufacturing industry has become an important economic force in the United States with a well-developed market  estimated at \$53 billion per year \citep{lund2012report}. Many products, especially the ones with long lifespans such as turbine blades and automobile parts (e.g., engines,  water pumps), are now routinely remanufactured at the near-end of their life cycles and returned to service \citep{dulman2018maintenance,seitz2007critical}.

Remanufacturing is an industrial process whereby used or broken-down products (or components), referred to as ``cores'', are restored to a like-new condition with an extended lifetime \citep{ostlin2009rm}. During this process, the cores pass through a number of operations including inspection, dismantling, part reprocessing, repair, replacement and reassembly. The performance of the remanufactured cores is expected to meet the desired product standards similar to the original product. Remanufacturing is distinctly different from related activities, such as reuse, repair, and recycling \citep{skrainka2012TurbineRm,lund2012report}. A product is reused when it has not completed its life-cycle and the user decides to stop its use, and a consumer sector is willing to accept it in its current use state, perhaps to its original purpose. Repair fixes what is broken or worn with no attempt to fully restore the product to a like-new condition or to a new life.  Recycling  recovers materials at the end of the product life, returning them into the use stream. Recycling a  complex product (e.g., car), however, can result in a loss of up to 95 percent of the value-added content (e.g., labor, energy) \citep{giuntini2001rm}, in addition to the loss of all functionality of the original product. By reclaiming the material content and retaining the embodied energy and labors  used for manufacturing the original product, remanufacturing is  a more efficient means of resource recirculation than these related activities.

Current remanufacturing practices mainly consider end-of-life products. Such a reactive approach has a number of drawbacks \citep{sutherland2008comparison,song2015proactive}. First, for many products,   remanufacturability is significantly reduced due to no adequate techniques to remanufacture retired products. Second, remanufacturing costs often increase significantly towards the end of a product's life cycle. Third,   substantial negative environmental effects are usually generated because much higher energy and material consumption is required for remanufacturing  due to heavy damage at the end of a product’s life. Several studies have shown that in some cases,  remanufacturing actually consumes more energy than manufacturing a new product \citep{chandler2011worth,gutowski2011worth}. It is paramount that decision makers make informed decisions on the timing of remanufacturing and ensure it is conducted when it is worth the effort.

The robustness of the remanufacturing planning decisions, however, can be threatened by data inadequacy. Since the optimal decision involves suggesting the optimal action, such as no intervention, remanufacturing, or scrapping, at different states (conditions), it is first and foremost to estimate the transition dynamics of a component. The estimation often faces a great challenge because of limited  field data. The situation may be further exacerbated as the field data typically contain a large amount of noises and incorrect information. This data inadequacy poses a critical question to decision makers: How does uncertainty in model parameters translate into uncertainty in the performance of interest? The decision makers must assess whether  any observed nominal improvement in the environmental and economic effects resulted from remanufacturing at certain states is likely to be a true improvement, suggesting remanufacturing in those states, or  conversely, a consequence of the parameter uncertainties due to statistical estimation errors, favoring remanufacturing when it causes negative effects.

Several studies attempted to address the impacts of the uncertainty in model parameters. It has been shown that  when the parameter uncertainty\textemdash the deviation of the model parameters from the true ones, is largely ignored, the optimal policy can lead to serious degradation in performance, because the optimal policy in sequential decision making is quite sensitive to parameter uncertainty, particularly perturbations in the transition probability \citep{mannor2007bias,iyengar2005rmdp,xu2012DRMDP}.  Therefore, with a tacit understanding that the state transition dynamics of a component has to be estimated from historical data in practice, remanufacturing planning is confronted with the \textit{external uncertainty}   due to the deviation of the estimates from their true values in addition to the \textit{internal uncertainty} due to the stochastic nature of a component's condition evolution. To address both internal and external variations and prescribe robust remanufacturing policies, we formulate the remanufacturing planning problem as a robust Markov decision process (RMDP).  We investigate the structural properties of the robust remanufacturing policies in the presence of statistical estimation errors.  We further conduct a computational study based on the operational dataset of the turbofan engine operated by NASA \citep{frederick2007user}. The main contribution of this paper is threefold. 
\begin{itemize}
	\item Provide a novel data-driven modeling framework for remanufacturing planning  in which decision makers can remain robust with respect to statistical estimation errors in transition dynamics.
	\item Establish   structural properties of optimal robust policies and provide insights for decision making in remanufacturing planning in the presence of parameter uncertainties. Our findings also make contributions to the robust Markov decision process literature in which few papers have focused on characterizing the structure of the optimal robust policy.
	\item Conduct computational studies to demonstrate the  optimal robust policies, investigate the out-of-sample performance of the resulting optimal robust remanufacturing policies, and derive data-driven solutions to improve the out-of-sample performance.
\end{itemize}

The remainder of this paper is organized as follows. Section \ref{sec:lit} reviews relevant literature on  remanufacturing planning and sequential decision making with parameter uncertainty. In Section \ref{sec:model}, we formulate the remanufacturing planning problem as a RMDP. Section \ref{sec:policy} establishes conditions to ensure  the optimal robust policies are of control-limit type. In Section \ref{sec:computationalStudy}, we present a computational study using simulated operational data of  NASA's turbofan engines.  Section \ref{sec:conclusion} concludes this study and suggests future research  directions.

\section{Literature Review}\label{sec:lit}
Our study is related to two streams of the literature: remanufacturing planning and sequential decision-making with parameter uncertainty.

\subsection{Remanufacturing Planning}

The management of remanufcturing  production and control activities greatly  differs  from management activities in traditional manufacturing. Production planning and control activities are more complex for remanufacturing firms due to complicating characteristics such as the uncertain timing and quantity of returns \citep{guide2000production}. The majority of literature in  remanufacturing operational management focus on production planning and control activities, such as reverse logistic management, balancing returns and demands, and inventory control. \cite{Dekker1999inventory} consider production planning and inventory control in systems where manufacturing and remanufacturing operations occur simultaneously. \cite{savaskan2004closed} identify and model three close-loop supply chains to  address the problem of choosing the appropriate reverse channel structure for the collection of used products. \cite{galbreth2010optimal} study the optimal core acquisition quantity problem subject to the uncertainty of core conditions. We refer the readers to a review \citep{govindan2015review} and the papers therein for more research works relevant to  reverse logistics and closed-loop suply chain.

Remanufacturing planning, while being recognized, has received little attentions. Existing works on remanufacturing timing decisions often either ignore both types of uncertainties in transition dynamics of a remanufacturing system or only focus on the internal variation. For example, \cite{song2015proactive} determine remanufacturing timing based on a \textit{deterministic} degradation process charaterized by residual strength factors. \cite{wang2016decision} recommend  remanufacturing  based on online monitoring: Products are remanufactured when it reaches the limit condition   beyond which the product is no longer remanufacturable. External variation is largely ignored, and hence, remanufacturing could be blindly suggested even if it might lead to increased negative environmental or economic impacts, resulting in the robustness of remanufacturing planning decisions in question.

Remanufacturing planning bears a close resemblance to maintenance planning which aims to determine the optimal timing of preventive maintenance.  In this paper, we use Markov models for remanufacturing planning; the most relevant works in maintenance optimization literature are the ones that model maintenance problems using a Markov decision process (e.g., \cite{Kharoufeh2010EJOR}, \cite{elwany2011structured},  \cite{Makis2013OR}). Most maintenance optimization models that are formulated as a Markov decision process (MDP), however,  assume that the cost parameters and the transition kernel are known, and hence, cannot provide satisfactory out-of-sample performances when future realizations deviate from the predicted ones. One of the few papers that consider ambiguity in transition probabilities is by \cite{kim2016robustCBM}. In his paper, \cite{kim2016robustCBM} considers a failing system whose underlying state is unobservable and accounts uncertainties in both posterior distributions and transition probabilities. The optimization of an ambiguous partially observable MDP (APOMDP) is generally challenging even for small state spaces. For remanufacturing planning, the state space would become large, rendering the optimization of the resulting APOMDP computationally prohibitive.

\subsection{Sequential Decision-Making with Parameter Uncertainty}

Conventional MDP assumes that the transition probabilities  and rewards  can be estimated confidently and focuses solely on the uncertainty stemming from the stochastic nature of MDPs.  In practice, however, estimating the true transition probabilities is difficult, if not impossible, because of the limited data availability or inevitable statistical errors or both.  When  parameters deviate from the true ones and such uncertainty is  ignored, the optimal policy can lead to serious inferior performance, because the optimal policy in sequential decision making is quite sensitive to parameter uncertainty, particularly deviations in the transition probability \citep{mannor2007bias,iyengar2005rmdp,xu2012DRMDP}.

Early works on the MDPs with parameter uncertainties, including \cite{silver1963markovian, satia1973markovian,white1986impreciseReward} and \cite{white1994markov}, formulate the uncertainty in either a game-theoretic or Bayesian approach.  In the game-theoretic formulation, it is assumed that the uncertainty about the transition probabilities is encoded by describing the set of all transition probability rows. Hence, when the decision maker makes a decision for a given state, the nature selects a transition probability row from the set to minimize the reward.  \cite{satia1973markovian} use the game-theoretic formulation to model the transition uncertainty in MDP and proposed a policy iteration procedure to solve the problem. \cite{white1994markov} further develop a modified policy iteration-based algorithm for the MDP with imprecise transition probabilities. The Bayesian approach, first introduced by \cite{silver1963markovian}, assumes a known priori probability distribution of each transition probability row. Hence, the transition probabilities can be updated along the Bellman's equations.  Dirichlet priors are a common choice of modeling the uncertainty in transition probabilities \citep{delage2010percentile}.

Most of the early contributions, however, do not concern the  construction of ambiguity sets. Inspired by the data-driven approaches, recent RMDP works \citep{iyengar2005rmdp,nilim2005RMDP,wiesemann2013RMDP} have developed various methods to construct the uncertainty set of transition probabilities that contain the true transition probabilities with high confidence. Many statistical methods, such as likelihood constraints, deviation-type constraints and distance metrics (e.g., Wasserstein ball, \(\phi-\)divergence balls), have been applied to construct an uncertainty set of transition probabilities with historical samples \citep{iyengar2005rmdp,nilim2005RMDP,wiesemann2013RMDP}. Reformulation of RMDPs with different types of ambiguity sets and the corresponding tractability have also been studied in the literature.  Compared to the theoretical orientation of these works, our present work focuses more narrowly on developing methods for a specific problem class, establishing structural properties of optimal robust policies, and  providing executable insights.

\section{ Robust Remanufacturing Planning Problem}\label{sec:model}
Consider remanufacturing planning of a single-unit system that degrades during its operation. Because we focus on single-unit systems, the words system and component are used interchangeably throughout the paper. The component is inspected at equally spaced discrete time epochs \(\mathcal{T}=\{0,1,\dots\}\). Let \((\mathcal{S}, \mathcal{K})\)  be the state space, where  \(\mathcal{S}=\{0,1,2,...,S\}\) represents the set of conditions and \(\mathcal{K}=\{0,1,\dots\}\)  represents the set of cumulative numbers of completed remanufacturing activities. Note that a larger state denotes a worse condition and the worst state \(S\) is an absorting state, meaning the systems stays there if there is no intervention. At  each epoch, a decision maker oberves the state of the component and then chooses an action from the  set \(\mathcal{A}=\{0,1,2\}\), where 0 means continuing operation to the next observation time, 1 means remanufacturing, which takes one period, and 2 means scraping the component.

The most important objective of remanufacturing is to minimize the negative environmental impacts while sustaining profitable growth. The direct environmental impacts of a manufactured/remanufactured system are often measured by greenhouse gas  emissions (e.g., CO\textsubscript{2}, CH\textsubscript{4}, N\textsubscript{2}O, etc.) using  life cycle assessment (LCA). LCA is a technique that compiles an inventory of relevant energy consumption and material inputs and environmental releases, and then evaluates the potential environmental impacts associated with identified inputs and releases \citep{curran2011scientific}.  Such an evaluation can be done using published databases (e.g., EcoScan)  or commercial softwares (e.g., GaBi) if the system is complicated. There are several types of LCA. For instance, cradle-to-grave is the assessment of a full product life cycle from resource extraction (``cradle") to use phase and disposal phase (``grave"), cradle-to-gate is an assessment of a partial product life cycle from resource extraction to the factory gate (i.e., before it transported to the consumer), and gate-to-gate, also a partial LCA, evaluates the eco-burden of a manufacturing facility. In this paper, rather than model different types of greenhouse gas emissions and formulate a multi-objective problem, we model the environmental impacts using carbon cost, which is determined by the amount of carbon emissions and the carbon price. As carbon trading increasingly recognized as one of the most effective approaches to incentivising companies to become environmental friendly \citep{abdallah2012green} and more carbon trading systems  established around the world, remanufacturing planning models that consider the carbon costs will become more relevant and applicable.

To model the profit of a remanufacturing system, we assume that during each decision period, the decision maker receives a gain \(g(s,k)\) (e.g., production revenue) if operation is not interrupted and incurs some environmental costs   \(e(s,k)\). The reward of keeping operation in one period is thus denoted by \(r(s,k)=g(s,k)-e(s,k)\).   If the decision is to remanufacture the component, a remanufacturing cost \(c_r\), which comprises the manufacturing and carbon costs, is incurred. If the system is scrapped, a salvage value \(c_s\) is received. We assume that $c_r$ and  $c_s$ are the same regardless of a component's condition.

Although remanufactuirng is supposed to restore a component to like-new conditions, each remanufacturing process typically makes the component less resistant to deterioration. Hence, a component's deterioration process depends on the cumulative number of completed remanufacturing activities, which is modeled as follows. For a system that has been remanufactured \(k\) times, let \(\pmb{P}=[p(s^\prime|s,k)]_{s, s^\prime \in \mathcal{S}, k\in \mathcal{K} }\)  denote the transition probability matrix when the decision is to wait.   Note that remanufacturing brings the system status to an as-good-as-new  condition, but increments the cumulative number of remanufacturings by one.  We assume that the deterioration process is irreversible, and hence \(\pmb{P}\) is an upper triangular matrix.  Due to limited data availability and statistical estimation errors, the transition probability of a remanufacturing system is fundamentally unknown.  To mitigate the effects of uncertain transition probabilities, we assume that the true transition kernel  is contained in an ambiguity set. 

Next, we present an important assumption regarding the ambiguity set, which ensures deterministic and Markovian policies \citep{iyengar2005rmdp}. 

\begin{assumption} [Rectangularity]
	An RMDP problem has a rectangular  ambiguity set if the ambiguity set has the form  \(\mathcal{U}=\bigotimes_{s \in\mathcal{S},k\in\mathcal{K}}\mathcal{U}_{sk}\) where \(\bigotimes\)	stands for the Cartesian product, and \(\mathcal{U}_{sk}\) is the projection of \(\mathcal{U}\) onto the parameters of state \((s,k)\).	
\end{assumption}

The implications of the rectangularity assumption is often interpreted in an adversarial setting \citep{iyengar2005rmdp,nilim2005RMDP}: The decision maker first chooses a policy \(\pi\). Then an adversary observes \(\pi\), and chooses a distribution that minimizes the reward. In this context, rectangularity is a form of an independence assumption: The choice of a particular distribution  for a given state $(s,k)$ does not limit the choices of the adversary of other states. There are two possible models for transition matrix uncertainty. In the first model, referred to as the stationary uncertianty model, the transition matrices chosen by the adversary depending on the policy  once and for all, and remain fixed thereafter. In the second model, referred to as the time-varying uncertainty model, the transition matrices can vary arbitrarily with time,  within their prescribed bounds. It has been shown in \cite{nilim2005RMDP} that for  a finite horizon problem with a discounted cost function, the gap between the optimal value of the stationary uncertainty problem and that of its time-varying counterpart goes to zero as the horizon length goes to infinity. In this paper, we consider the stationary worst-case distribution, that is, the choices of $\pmb p(\cdot|s,k)$ are the same every time the state $(s,k)$ is encountered.   Note that there is no ambiguity in transitions in the period during which remanufacturing is conducted, since remanufacturing takes one period and there is no transition in that period.  Because the optimal robust policies of the remanufacturing planning are Markovian and deterministic under the rectangularity assumption, we have  the robust remanufacturing planning optimization model in the following recursive form:
\begin{equation}
\label{Bellman}
V(s,k) =\sup_{a \in \mathcal{A}}w(s,k;a), \tag{RRmPO}
\end{equation}
where
\begin{equation*}
w(s,k;a)=
\left\{
\begin{array}{ll}
\inf_{\pmb{P} \in \mathcal{U}} r(s,k)+\beta \sum_{s^\prime \in \mathcal{S}} p(s^\prime|s,k)  V(s^\prime,k), & a=0,\\
-c_\text{r}+\beta V(0,k+1), & a=1, \\
c_\text{s}, & a= 2.
\end{array}
\right. 
\end{equation*}

\subsection{Construction of Ambiguity Sets} \label{sec:construction}

As mentioned in the introduction, the motivation for the robust methodology is the presence of the statistical errors associated with estimating the transition probabilities using historical data. A natural choice for the ambiguity set is the confidence regions associated with density estimation. We thus use the Kullback-Leibler (KL) distance to construct the ambiguity set around the empirical transition probabilities. Bootstrap resampling, a common non-parametric method to address the uncertainty, is also used to construct confidence intervals of estimators. 

An important property of the ambiguity set constructed using  these two methods is that it cannot pop a scenario while allows scenario suppression. A scenario that never occurs in the nominal problem cannot have a positive probability (or, ``pop") in the ambiguous problems. Such a property ensured that the assumption of  no-self-improving in system's deterioration   is held in the ambiguous problem.  More importantly, the ambiguity sets constructed using these two methods are convex, and hence leads to a  computational tractable problem. In addition, the KL distance and bootstrapping are already being used in statistics, making them attractive  to deal with data directly.

\textbf{KL Distance.}  In a data-driven setting, the empirical distribution typically serves as the nominal distribution.   For notational convenience, we drop the notations of \(k\) and \(a\). Let $\hat{\pmb{p}}_{s}$ be the maximum likelihood estimator of $\pmb{p}_{s}$  given a state \(s\in\mathcal{S}\):
\begin{equation}
	\hat{p}_{s}(s')=\frac{n(s^\prime|s)}{\sum_{s^\prime \in \mathcal{S}}n(s^\prime|s)},
\end{equation}
where \(n(s^\prime|s)\) is the number of transitions from state \(s\) to \(s^\prime\). The KL distance between $\hat{\pmb{p}}_{s}$ and $\pmb{p}_{s}$ is defined as 
\begin{equation}
	\label{KL}
	D(\pmb{p}_{s}||\hat{\pmb{p}}_{s}) = \sum_{s^\prime\in\mathcal{S}}p_{s}(s^\prime)\log\left(\frac{p_{s}(s^\prime)}{\hat{p}_{s}(s^\prime)}\right).
\end{equation}
It is obvious that $D(\pmb{p}_{s}||\hat{\pmb{p}}_{s})\geq 0$ with equality holds when $\pmb{p}_{s}=\hat{\pmb{p}}_{s}$. Given a state \(s\in\mathcal{S}\), the KL-distance-based ambiguity set is given by
\begin{equation}	
	\mathcal{U}_{s} =\left\{\pmb{p}_s:	D(\pmb{p}_{s}||\hat{\pmb{p}}_{s})\leq \theta,\sum_{s^\prime \in \mathcal{S}}p_s(s^\prime)=1,p_s(s^\prime)\in[0,1],s^\prime\in\mathcal{S}\right\}.
\end{equation}
Let \(N_s=\sum_{s^\prime \in \mathcal{S}}n(s^\prime|s)\). It has been shown that the normalized estimated KL-distance \(2N_s D(\pmb p_s||\hat{\pmb{p}}_s)\) asymptotically  follows a \(\chi_{|\mathcal{S}|-1}^2\) distribution (see more details in \cite{ben2013robust}). We thus have the following (approximate) (1-\(\alpha\))-confidence set around \(\hat{\pmb{p}}_s\)
\begin{equation}
	\mathcal{U}_s=\{\pmb{p}_s: 	D(\pmb{p}_{s}||\hat{\pmb{p}}_{s})\leq \theta\} 
\end{equation}
where \(\theta=\chi_{|\mathcal{S}|-1, 1-\alpha}^2/(2N_s) \). 

\textbf{Bootstrap Resampling}. Let \(\boldsymbol{D}\) be  realizations of the Markov chain \(\{S_n;n\geq 0\}\) with transition matrix \(\pmb P\) and \(\hat{\pmb P}\) be the maximum likelihood estimator of \(\pmb P\) based on the observed data \(\boldsymbol{D}\). For a large number of statistics of interest, the bootstrap distribution approximates  the sampling distribution (i.e., asymptotic normality of \(\sqrt{N}(\tilde{\pmb P}-\hat{\pmb P})\) where \(\tilde{\pmb p}\) is the bootstrap estimator of \(\pmb P\)).      From the bootstrap distribution, one can assess the uncertainty of each probability in the transition matrix and construct the confidence interval for each probability.  The bootstrap distribution can be calculated by direct theoretical calculation, which draws samples with replacement, or Monte Carlo approximation. The reader is referred to \cite{efron1994introduction} for more detailed bootstrapping procedures. Note that the ambiguity set constructed by bootstrap resampling has the same form of the ambiguity set constructed using the interval matrix  method in \cite{nilim2005RMDP}. Therefore, we also refer to the method of constructing ambiguity sets using bootstrap sampling as the interval matrix method hereinafter since it better describes the form of the ambiguity sets.

\section{Structure of the Optimal Robust Policy} \label{sec:policy}
In this section, we investigate the structural properties of the optimal robust remanufacturing policies. We will focus our attention on control-limit policies. We establish sufficient conditions that ensure the existence of monotonically control-limit policies. The optimality of such structured policies is important because they are appealing to decision makers and enables efficient computation and are easy to implement.  Our analysis will make significant use of the notion of the stochastic dominance, which helps establish stochastic dominance relationships for transition behaviors.  Below, we define some stochastic order concepts that are used in our analysis.

\begin{definition}
	\mbox{} 
	\label{definition01}
	\begin{itemize}
		\item[\textup{(a)}] A transition probability matrix \(\pmb{P}=[p(i|j)]_{i,j=0,1,...n}\) is said to be IFR (increasing failure rate)  if \(\sum_{i=m}^{n}p(i|j)\) is non-decreasing in \(j\) for all \(m=0,1,...,n\).
		
		\item[\textup{(b)}] For two transition probability matrices \( \pmb{P}_1=[p_1(i|j))]_{i,j=0,1,...n}\) and \(\pmb{P}_2=[( p_2(i|j))]_{i,j=0,1,...n}\), we say \(\pmb{P}_1\) dominates \(\pmb{P}_2\), \(\pmb{P}_1\succeq \pmb{P}_2\), if \(\sum_{i=m}^{n}p_1(i|j) \geq \sum_{i=m}^{n}p_2(i|j)\) for all \(j,m=0,1,...,n\).
	\end{itemize}

\end{definition}

\begin{assumption}
	\label{asm:transition}
	\mbox{}
	Let \(\hat{\pmb{P}}(\cdot|\cdot,k)\) denote the nominal transition probability matrix for a system that has been remanufactured \(k\) times, 	
	\begin{itemize}
		\item [\textup{(a)}]  \(\hat{\pmb{P}}(\cdot|\cdot,k)\) is IFR for all \(k\in \mathcal{K}\).
		\item [\textup{(b)}] \(\hat{\pmb{P}}(\cdot|\cdot,k+1) \succeq\hat{\pmb{P}}(\cdot|\cdot,k) \) for all \(k\in \mathcal{K}\). 
	\end{itemize} 
\end{assumption}
Assumption \ref{asm:transition}(a) implies that, given the  cumulative number of completed remanufacturing activities \(k\), the system in a worse state at the current epoch is more likely than the other to be found in a worse condition at the next epoch. Assumption \ref{asm:transition}(b) imposes a first-order stochastic dominance relationship  among the system's deterioration matrices corresponding to different remanufacturing histories. More explicitly, given two systems with   the same condition but different remanufacturing histories, the system with a larger \(k\) is more likely to get worse than the other during operation. Additional assumption is made  regarding the operational gains, environmental costs, and the salvage value. 

\begin{assumption}
	\mbox{} 
	\label{asm:coststructure}
	\begin{itemize}
		\item[\textup{(a)}] The operational gain \(g(s,k)\) is non-increasing in \(s\in \mathcal{S}\) and \(k\in\mathcal{K}\), and the carbon cost \(e(s,k)\) is non-decreasing in \(s\in \mathcal{S}\) and \(k\in\mathcal{K}\);
		\item[\textup{(b)}] The reward at state \(S\), the salvage value \(c_s\) and the discount factor \(\beta\) satisfy the following condition: \( \dfrac{r(S,0)}{1-\beta}<c_\text{s}\).
	\end{itemize}
\end{assumption}
Assumption \ref{asm:coststructure}(a) implies that as the number of completed remanufacturing activities increases and its condition worsens, the gain decreases and the carbon cost increases. For example, an engine in a worse state usually incurs higher maintenance costs and consumes more gasoline or electricity.  Assumption \ref{asm:coststructure}(b) ensures that the decision of no intervention (i.e., \(a=0\)) is excluded when a  system is at the worst state  for all \(k \in \mathcal{K}\) because it is not practical that the system stays in the worst condition \(S\)  for an infinitely long time. This unrealistic scenario is eliminated by assuming that the  total   expected reward from doing nothing at state \((S,0)\),  computed as  \(\sum_{t=0}^{\infty}\beta^t r(S,0)=\dfrac{r(S,0)}{1-\beta}\),  is  less than  the salvage value. Since \(r(S,0) \ge  r(S,k)\) for all \(k>0\), the condition also eliminates the no-intervention option for state \((S,k)\) for all \(k>0\).

\subsection{Remanufacturing Planning with KL-Distance-Based Ambiguity Sets}

In this section, we  consider Model \eqref{Bellman} with KL-distance-based ambiguity sets. We  provide reformulations and  establish conditions that ensure control-limit type policies. We first reduce the  bi-level problem (\ref{Bellman})  to a single-level problem by applying the Langurangian dual theory, and then investigate the structure of the robust value function, which is necessary for establishing control-limit robust remanufacturing planning policies.

\begin{proposition}
	\label{prop:reformulation-KL}
	For Model \eqref{Bellman} with  KL-distance-based ambiguity sets, \(w(s,k;0)\) can be reformulated as 
	\begin{equation}
	\label{reformulation-KL-a0}
	w(s,k;0) = \sup_{\mu > 0} r(s,k)- \mu\log\left(\sum_{s'\in\mathcal{S}}\hat{p}(s'|s,k) \exp\left(\dfrac{-\beta V(s',k)}{\mu}\right)\right) -\mu\theta,
	\end{equation}
	and the worst-case distribution is
	\begin{equation}
	\label{theorem04_00b}
	p^*(s'|s,k)= \dfrac{\hat{p}(s'|s,k)\exp\left(\dfrac{-\beta V(s',k)}{\mu_{sk}^*}\right)}{\sum_{s'\in\mathcal{S}}\hat{p}(s'|s,k) \exp\left(\dfrac{-\beta V(s',k)}{\mu_{sk}^*}\right)},
	\end{equation}
	where \(\mu_{sk}^*\) is the optimal solution of the dual problem \eqref{reformulation-KL-a0} given \(s\) and \(k\).
\end{proposition}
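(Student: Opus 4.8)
The plan is to observe that, for a fixed pair $(s,k)$, the inner minimization defining $w(s,k;0)$ is a convex program — minimize the affine functional $\pmb p \mapsto r(s,k)+\beta\sum_{s'}p(s'|s,k)V(s',k)$ over the intersection of the probability simplex with the KL ball $\{\,D(\pmb p_s\|\hat{\pmb p}_s)\le\theta\,\}$ — and to solve it in closed form by Lagrangian duality, exploiting that a relative-entropy constraint is conjugate to a log-partition function. Throughout I fix $(s,k)$, abbreviate $c(s'):=\beta V(s',k)$ and $\hat p(s'):=\hat p(s'|s,k)$, pull the constant $r(s,k)$ out of the infimum, and note that $V(\cdot,k)$ is finite since rewards are bounded and $\beta<1$.

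First I would record the facts that make duality applicable: the feasible set is convex (the KL divergence is convex in $\pmb p$ for fixed $\hat{\pmb p}$, and the simplex is convex) and, since $\theta>0$, the nominal distribution $\hat{\pmb p}$ lies in its relative interior, so Slater's condition holds and strong duality with attainment of the dual optimum follows from standard convex-programming arguments. I would also note that the relative-entropy term forces $p(s')=0$ whenever $\hat p(s')=0$ and keeps $p(s')>0$ for every $s'$ in the support of $\hat{\pmb p}$, so the nonnegativity constraints are never active and may be dropped; this is precisely the "no popping, suppression allowed" property highlighted in Section \ref{sec:construction}.

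Next I would form the Lagrangian with multiplier $\mu\ge 0$ for $D(\pmb p_s\|\hat{\pmb p}_s)\le\theta$ and multiplier $\lambda\in\mathbb{R}$ for $\sum_{s'}p(s')=1$, and minimize over $\pmb p$ by first-order conditions. For $\mu>0$ this gives $\log\!\bigl(p(s')/\hat p(s')\bigr)=(-c(s')-\lambda-\mu)/\mu$, i.e. the exponentially tilted form $p(s')\propto\hat p(s')\exp(-c(s')/\mu)$; imposing $\sum_{s'}p(s')=1$ eliminates $\lambda$ and yields exactly the worst-case distribution \eqref{theorem04_00b} once $\mu$ is set to the dual optimizer $\mu^*_{sk}$. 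Substituting this minimizer back into the Lagrangian cancels the terms involving $\sum_{s'}p(s')c(s')$ and leaves the dual function $g(\mu)=r(s,k)-\mu\log\!\bigl(\sum_{s'}\hat p(s')\exp(-c(s')/\mu)\bigr)-\mu\theta$, so by strong duality $w(s,k;0)=\sup_{\mu>0}g(\mu)$, which is \eqref{reformulation-KL-a0}. I would also remark that $\mu\mapsto\mu\log\sum_{s'}\hat p(s')\exp(-c(s')/\mu)$ is the perspective of the convex log-sum-exp function, hence convex, so $g$ is concave and the one-dimensional maximization is well posed.

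The main point requiring care — rather than ingenuity — is the boundary behaviour in $\mu$: I must check that restricting to $\mu>0$ loses nothing (as $\mu\downarrow 0$ one has $g(\mu)\to r(s,k)+\beta\min_{s':\hat p(s')>0}V(s',k)$, the value of the unconstrained-within-the-simplex minimization, which is dominated by the constrained value whenever $\theta<\infty$) and that the optimal $\mu^*_{sk}$ is finite and strictly positive when the KL constraint binds, so that \eqref{theorem04_00b} is well defined. The degenerate case in which $V(\cdot,k)$ is constant on the support of $\hat{\pmb p}(\cdot|s,k)$, where the infimum is trivially $r(s,k)+\beta V$ and every $\mu$ is optimal, should be dispatched separately.
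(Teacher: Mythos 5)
Your proposal is correct and follows essentially the same route as the paper's proof: Lagrangian duality on the inner minimization, Slater's condition via strict feasibility of $\hat{\pmb p}(\cdot|s,k)$, first-order conditions yielding the exponentially tilted distribution, and elimination of the normalization multiplier to obtain the one-dimensional dual in $\mu$. Your added attention to the boundary behaviour as $\mu\downarrow 0$ and the degenerate case where $V(\cdot,k)$ is constant on the support is a welcome refinement that the paper's proof omits, but it does not change the argument.
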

\begin{proof}
	See Appendix A.1.
\end{proof}

\begin{proposition}
	\label{prop:Monotone-ValueFunction-KL}
	For Model \eqref{Bellman} with ambiguity sets constructed using the KL distance,  the value function \(V(s,k)\) is non-increasing in  \(s \in \mathcal{S}\) and \(k\in\mathcal{K}\).
\end{proposition}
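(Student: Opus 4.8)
The plan is to exhibit $V$ as the fixed point of the dynamic-programming operator behind \eqref{Bellman} and to show that this operator preserves monotonicity in both arguments, so that the fixed point inherits the property. Let $\mathcal{V}$ be the set of bounded real-valued functions $U$ on $\mathcal{S}\times\mathcal{K}$ that are non-increasing in $s\in\mathcal{S}$ for each fixed $k$ and non-increasing in $k\in\mathcal{K}$ for each fixed $s$, and define $(\mathcal{L}U)(s,k)=\sup_{a\in\mathcal{A}}w(s,k;a)$ with $w(s,k;a)$ as in the case definition just after \eqref{Bellman} but evaluated at $U$ instead of $V$. Under the rectangularity assumption and $0<\beta<1$, $\mathcal{L}$ is a $\beta$-contraction on the space of bounded functions \citep{iyengar2005rmdp,nilim2005RMDP}, and $\mathcal{V}$ is closed under uniform limits; hence it suffices to prove $\mathcal{L}(\mathcal{V})\subseteq\mathcal{V}$, after which $V=\lim_n\mathcal{L}^nU_0\in\mathcal{V}$ for any $U_0\in\mathcal{V}$, e.g.\ $U_0\equiv 0$. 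So fix $U\in\mathcal{V}$; I will check that each of $w(\cdot,\cdot;0)$, $w(\cdot,\cdot;1)$, $w(\cdot,\cdot;2)$ lies in $\mathcal{V}$, after which their pointwise maximum does too.

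The cases $a=1$ and $a=2$ are immediate: $w(s,k;2)=c_\text{s}$ is constant, and $w(s,k;1)=-c_\text{r}+\beta U(0,k+1)$ does not depend on $s$ and is non-increasing in $k$ because $U(0,\cdot)$ is. For $a=0$ I would invoke the Lagrangian reformulation of Proposition~\ref{prop:reformulation-KL} — it is just the dual of the KL-constrained inner minimization, so it holds verbatim with $U$ in place of $V$ — giving
\[
w(s,k;0)=\sup_{\mu>0}\Big[\,r(s,k)-\mu\log\Big(\textstyle\sum_{s'\in\mathcal{S}}\hat{p}(s'|s,k)\exp\!\big(-\beta U(s',k)/\mu\big)\Big)-\mu\theta\,\Big].
\]
Fix $\mu>0$. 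By Assumption~\ref{asm:coststructure}(a) the reward $r(s,k)=g(s,k)-e(s,k)$ is non-increasing in $s$ and in $k$, and $\mu\theta$ is constant in both, so it is enough to show that
\[
\Phi_\mu(s,k):=\sum_{s'\in\mathcal{S}}\hat{p}(s'|s,k)\exp\!\big(-\beta U(s',k)/\mu\big)
\]
is non-decreasing in $s$ and in $k$. Indeed, then $-\mu\log\Phi_\mu(s,k)$ is non-increasing in both, so the bracketed expression is in $\mathcal{V}$ for every $\mu>0$, and taking the supremum over $\mu>0$ preserves this.

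The remaining — and only substantive — step is the monotonicity of $\Phi_\mu$. Since $U(\cdot,k)$ is non-increasing, $s'\mapsto\exp(-\beta U(s',k)/\mu)$ is non-decreasing, so $\Phi_\mu(s,k)$ is the expectation of a non-decreasing function of $s'$ under the nominal law $\hat{\pmb P}(\cdot|s,k)$. Monotonicity in $s$ then follows from Assumption~\ref{asm:transition}(a): IFR of $\hat{\pmb P}(\cdot|\cdot,k)$ yields $\hat{\pmb P}(\cdot|s,k)\succeq\hat{\pmb P}(\cdot|s-1,k)$ in the sense of Definition~\ref{definition01}(b), i.e.\ first-order stochastic dominance, hence $\Phi_\mu(s,k)\ge\Phi_\mu(s-1,k)$. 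Monotonicity in $k$ is where the two channels of $k$-dependence must be handled together: (i) $U(\cdot,k+1)\le U(\cdot,k)$ pointwise implies $\exp(-\beta U(s',k+1)/\mu)\ge\exp(-\beta U(s',k)/\mu)$ for all $s'$, raising the integrand; and (ii) Assumption~\ref{asm:transition}(b) gives $\hat{\pmb P}(\cdot|s,k+1)\succeq\hat{\pmb P}(\cdot|s,k)$, so the expectation of the non-decreasing integrand under the larger law increases. Combining (i) and (ii) gives $\Phi_\mu(s,k+1)\ge\Phi_\mu(s,k)$, which establishes $\mathcal{L}(\mathcal{V})\subseteq\mathcal{V}$ and hence $V\in\mathcal{V}$. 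I expect the coupling in (i)–(ii) — reconciling the monotonicity of the value in $k$ with the stochastic ordering of the nominal kernels in $k$ — to be the delicate point; the rest is bookkeeping. (If the KL radius is taken state-dependent, say $\theta_s$, the same argument works provided $\theta_s$ is non-decreasing in $s$, which is automatic when a common radius is used across states.)
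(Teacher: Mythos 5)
Your proposal is correct and follows essentially the same route as the paper: the paper proves the statement by induction on the robust value-iteration iterates (starting from $V^0\equiv 0$), which is exactly your "the Bellman operator preserves the class of monotone functions" argument, and both proofs handle $a=0$ via the dual reformulation of Proposition~\ref{prop:reformulation-KL} together with the stochastic-dominance step (the paper cites Lemma 4.7.2 of Puterman where you argue directly that the expectation of the non-decreasing integrand increases under IFR and under Assumption~\ref{asm:transition}(b)). Your variant of fixing $\mu$ and taking the supremum afterwards, rather than evaluating at the optimal dual multiplier $\mu^*_{s'k}$ and chaining inequalities as the paper does, is an equivalent and slightly cleaner bookkeeping choice.
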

\begin{proof}
	See Appendix A.2.
\end{proof}

Next, we establish conditions that ensure control-limit robust policy structures, that is, the remanufacturing decisions  are of control-limit type with respect to the condition of the system and the cumulative number of completed remanufacturing activities. 

\begin{theorem} 
	\label{thm:Monotone-S-KL} 
	For Model \eqref{Bellman} with KL-distance-based ambiguity sets,   there exists a  cumulative number of completed remanufacturing activities \(k^\ast \in \mathcal{K}\), and an  operational state \(s_k \in \mathcal{S}\) such that for \(k <k^\ast\)
	\begin{equation*}
	a(s,k)=\Bigg\{\begin{array}{ll}
	0 & \text{if } s < s_k,\\
	1 & \text{if } s\geq s_k,
	\end{array}
	\end{equation*}
	and for \(k\geq k^\ast\)
	\begin{equation*}
	a(s,k)=\Bigg\{\begin{array}{ll}
	0 & \text{if } s < s_k,\\
	2 & \text{if } s\geq s_k.
	\end{array}
	\end{equation*}
\end{theorem}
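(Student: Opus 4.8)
The plan is to exploit a structural asymmetry in the Bellman recursion \eqref{Bellman}: for a fixed remanufacturing count $k$, only the waiting value $w(s,k;0)$ depends on the operational state $s$, while $w(s,k;1)=-c_r+\beta V(0,k+1)$ and $w(s,k;2)=c_s$ are constant in $s$. If $w(s,k;0)$ is shown to be non-increasing in $s$, the set of states where waiting is (weakly) optimal is a lower interval $\{s<s_k\}$, and on its complement the optimal action is decided purely by the sign of $w(s,k;1)-w(s,k;2)$, which depends only on $k$. A second monotonicity argument, this time in $k$, then yields the count threshold $k^*$ separating the ``remanufacture'' regime from the ``scrap'' regime.

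\textbf{Step 1: monotonicity of the waiting value in $s$.} Fix $k$ and a dual multiplier $\mu>0$, and look at the bracketed objective in \eqref{reformulation-KL-a0}. The term $r(s,k)$ is non-increasing in $s$ by Assumption~\ref{asm:coststructure}(a) (since $g$ is non-increasing and $e$ non-decreasing in $s$). Put $h(s')=\exp(-\beta V(s',k)/\mu)$; by Proposition~\ref{prop:Monotone-ValueFunction-KL}, $V(\cdot,k)$ is non-increasing, so $h$ is non-decreasing in $s'$. Assumption~\ref{asm:transition}(a) states that $\hat{\pmb P}(\cdot|\cdot,k)$ is IFR, which by Definition~\ref{definition01} means $\hat{\pmb P}(\cdot|s{+}1,k)$ first-order stochastically dominates $\hat{\pmb P}(\cdot|s,k)$; hence $\sum_{s'}\hat p(s'|s,k)h(s')$ is non-decreasing in $s$, and therefore $-\mu\log\!\bigl(\sum_{s'}\hat p(s'|s,k)h(s')\bigr)$ is non-increasing in $s$. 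Summing the two non-increasing terms and subtracting the constant $\mu\theta$, the objective in \eqref{reformulation-KL-a0} is non-increasing in $s$ for every $\mu>0$; taking the supremum over $\mu>0$ preserves monotonicity, so $w(s,k;0)$ is non-increasing in $s$.

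\textbf{Step 2: the thresholds $s_k$ and $k^*$.} Let $m(k)=\max\{w(s,k;1),w(s,k;2)\}$, a quantity independent of $s$, and set $s_k=\min\{s\in\mathcal S:\, w(s,k;0)<m(k)\}$. This set is nonempty: at $s=S$ the absorbing property gives $w(S,k;0)=r(S,k)+\beta V(S,k)$, and were action $0$ optimal there we would get $V(S,k)=r(S,k)/(1-\beta)\le r(S,0)/(1-\beta)<c_s=w(S,k;2)$ by Assumption~\ref{asm:coststructure}(b), a contradiction; hence $s_k\le S$. By Step 1, action $0$ is optimal exactly on $\{s<s_k\}$, and on $\{s\ge s_k\}$ the optimal action is $1$ when $w(s,k;1)\ge w(s,k;2)$ and $2$ otherwise, breaking ties toward remanufacturing. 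That comparison reduces to the sign of $-c_r+\beta V(0,k+1)-c_s$, which is $s$-free; since $V(0,k+1)$ is non-increasing in $k$ (Proposition~\ref{prop:Monotone-ValueFunction-KL}), so is $-c_r+\beta V(0,k+1)$, and we may define $k^*=\min\{k\in\mathcal K:\, -c_r+\beta V(0,k+1)<c_s\}$ (if this set is empty, the scrap regime is vacuous and any $k^*$ works). For $k<k^*$, $w(s,k;1)\ge w(s,k;2)$, so on $\{s\ge s_k\}$ the optimal action is $1$; for $k\ge k^*$ the strict reverse inequality holds, so on $\{s\ge s_k\}$ the optimal action is $2$. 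Together with Step 2's first half this is exactly the claimed control-limit form.

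\textbf{Main obstacle.} The crux is Step 1: converting the nonlinear KL dual of Proposition~\ref{prop:reformulation-KL} into a monotone object. It requires using the closed-form dual, the monotonicity of the \emph{robust} value function (Proposition~\ref{prop:Monotone-ValueFunction-KL}), and the stochastic-order reading of the IFR assumption simultaneously, and verifying that the outer supremum over $\mu$ does not break monotonicity. A secondary delicate point is well-posedness of the thresholds — that $s_k\le S$ (handled via the absorbing state and Assumption~\ref{asm:coststructure}(b)) and that $k^*$ may fail to be finite, in which case the second regime is simply empty; both should be stated explicitly.
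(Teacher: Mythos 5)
Your proof is correct, and it takes a somewhat different route from the paper's. The paper's Appendix A.3 argument never touches the KL dual or the waiting value directly: it observes that $w(s,k;1)$ and $w(s,k;2)$ are constant in $s$, and runs a sandwich argument — if, say, $a^\ast(s,k)=1$, then $w(s',k;1)=w(s,k;1)=V(s,k)\ge V(s',k)\ge w(s',k;1)$ by Proposition \ref{prop:Monotone-ValueFunction-KL}, forcing $a^\ast(s',k)=1$ for all $s'>s$; the threshold $k^\ast$ is obtained by the identical sandwich in $k$ applied to the scrap action. You instead prove that $w(s,k;0)$ itself is non-increasing in $s$ by working through the dual of Proposition \ref{prop:reformulation-KL} with the IFR reading of Assumption \ref{asm:transition}(a) — essentially re-deriving the chain of inequalities from the proof of Proposition \ref{prop:Monotone-ValueFunction-KL} — and then construct $s_k$ and $k^\ast$ explicitly, with $k^\ast$ characterized by the sign of $-c_r+\beta V(0,k+1)-c_s$ and the monotonicity of $V(0,\cdot)$. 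What your version buys: an explicit formula for both thresholds, a clean tie-breaking convention, and a verification that the non-wait region is nonempty ($s_k\le S$, via the absorbing state and Assumption \ref{asm:coststructure}(b)) — a well-posedness point the paper's proof silently skips even though the theorem asserts $s_k\in\mathcal{S}$. What the paper's version buys: it is shorter and avoids re-opening the dual, needing only the monotonicity of $V$ and the $s$-independence of the non-wait actions, which is why the same proof transfers verbatim to the interval-matrix case in Theorem \ref{thm:Monotone-S-IM}.
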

\begin{proof}
	See Appendix A.3.
\end{proof}

Theorem \ref{thm:Monotone-S-KL} shows that for a system that has \(k<k^\ast\), the optimal decision is either wait until the next period or remanufacture, and the system is remanufactured when the condition is equal to or exceeds a limit. When the cumulative number of completed remanufacturing activities reaches the threshold \(k^\ast\), the optimal decision is either wait until the next period or scrap and there exists a scrapping threshold. This implies that despite the cost savings and environmental benefits that make remanufacturing appealing, remanufacturing activity is not recommended after being conducted  certain number of times. Note that \(k^\ast=0\) is a special case that remanufacturing is not optimal for all \(k \in \mathcal{K}\). Let \(\zeta_{\text{rm}}(k)\) denote the control limit \(s_k\) for \(k<K^\ast\) and \(\zeta_{\text{scrap}}(k)\) denote the the control limit \(s_k\) for \(k\ge K^\ast\).  We further examine the structure of \(\zeta_{\text{rm}}(k)\) and \(\zeta_{\text{scrap}}(k)\) in the next theorem.

\begin{theorem} 
		\label{thm:Monotone-K-KL} 
		Consider  Model \eqref{Bellman} with the KL-distance-based ambiguity sets. Then, the following holds:
		\begin{itemize}
			\item [\textup{(a)}] 	
			
			If
			\(
			\dfrac{\beta r(0,0)}{1-\beta}-\beta c_s \leq r(s,k) - r(s,k+1)
			\), 
			\(\zeta_{\textup{rm}}(k)\) is non-increasing in \(k,k<k^*\).
			\item [\textup{(b)}] \(\zeta_{\textup{scrap}}(k)\) is non-increasing in \(k, k\ge k^*\).
		\end{itemize}
	
\end{theorem}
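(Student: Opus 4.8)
The plan is to route both claims through a single structural lemma: the continuation value
\[
w(s,k;0)=r(s,k)+\beta W(s,k),\qquad W(s,k):=\inf_{\pmb p\in\mathcal U_{sk}}\sum_{s'\in\mathcal S}p(s'|s,k)\,V(s',k),
\]
is non-increasing in $k$ for every fixed $s$. Since $r(s,k)$ is non-increasing in $k$ by Assumption \ref{asm:coststructure}(a), it suffices to show $W(s,k)$ is non-increasing in $k$, and for this I would work entirely with the dual representation of Proposition \ref{prop:reformulation-KL}, under which $\beta W(s,k)=\sup_{\mu>0}\bigl[-\mu\log\bigl(\sum_{s'}\hat p(s'|s,k)\exp(-\beta V(s',k)/\mu)\bigr)-\mu\theta\bigr]$.

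To establish $W(s,k+1)\le W(s,k)$ I would interpose the intermediate value $\sup_{\mu>0}\bigl[-\mu\log\bigl(\sum_{s'}\hat p(s'|s,k+1)\exp(-\beta V(s',k)/\mu)\bigr)-\mu\theta\bigr]$, which pairs the level-$(k+1)$ nominal law with the level-$k$ value function. Because $V(\cdot,k+1)\le V(\cdot,k)$ pointwise (Proposition \ref{prop:Monotone-ValueFunction-KL}), each summand $\exp(-\beta V(s',k+1)/\mu)$ is at least $\exp(-\beta V(s',k)/\mu)$, and since $t\mapsto-\mu\log t$ is decreasing for $\mu>0$ this interposed value is $\ge\beta W(s,k+1)$ for every $\mu$, hence after the supremum. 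Because $V(\cdot,k)$ is non-increasing in $s$ (same proposition), $s'\mapsto\exp(-\beta V(s',k)/\mu)$ is non-decreasing, so the first-order stochastic dominance $\hat{\pmb P}(\cdot|\cdot,k+1)\succeq\hat{\pmb P}(\cdot|\cdot,k)$ of Assumption \ref{asm:transition}(b) makes $\sum_{s'}\hat p(s'|s,k+1)\exp(-\beta V(s',k)/\mu)\ge\sum_{s'}\hat p(s'|s,k)\exp(-\beta V(s',k)/\mu)$ for each $\mu$; applying $-\mu\log(\cdot)$ and the supremum brings the interposed value down to $\beta W(s,k)$. Composing the two bounds yields $W(s,k+1)\le W(s,k)$. (This uses $\mathcal U_{s,k}$ and $\mathcal U_{s,k+1}$ to share the KL radius $\theta$, as in Proposition \ref{prop:reformulation-KL}; the chain also closes whenever that radius is non-decreasing in $k$, the natural regime when later remanufacturing cycles are observed no more often.)

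Part (b) then drops out. For $k\ge k^\ast$, Theorem \ref{thm:Monotone-S-KL} says the policy is of wait/scrap type, so $\zeta_{\textup{scrap}}(k)=\min\{s\in\mathcal S:\,c_s\ge w(s,k;0)\}$; since the scrap payoff $w(s,k;2)=c_s$ is independent of $k$ and $w(s,k;0)$ is non-increasing in $k$, the sets $\{s:\,c_s\ge w(s,k;0)\}$ grow (weakly) with $k$, so their minima are non-increasing, i.e.\ $\zeta_{\textup{scrap}}(k+1)\le\zeta_{\textup{scrap}}(k)$.

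Part (a) is the delicate one, because for $k<k^\ast$ the competing action is remanufacturing, and its value $w(s,k;1)=-c_r+\beta V(0,k+1)$ is itself non-increasing in $k$ --- the ``wrong'' direction. Writing $\Delta(s,k):=w(s,k;1)-w(s,k;0)$, Theorem \ref{thm:Monotone-S-KL} gives $\zeta_{\textup{rm}}(k)=\min\{s:\,\Delta(s,k)\ge 0\}$, so it is enough to show $\Delta(s,k+1)\ge\Delta(s,k)$ for every $s$. Expanding,
\[
\Delta(s,k+1)-\Delta(s,k)=\bigl[r(s,k)-r(s,k+1)\bigr]+\beta\bigl[W(s,k)-W(s,k+1)\bigr]-\beta\bigl[V(0,k+1)-V(0,k+2)\bigr],
\]
in which the first bracket is $\ge 0$ (Assumption \ref{asm:coststructure}(a)), the second is $\ge 0$ (the lemma), and the third is a nonnegative quantity not depending on $s$; so everything reduces to dominating $\beta\bigl[V(0,k+1)-V(0,k+2)\bigr]$ by $r(s,k)-r(s,k+1)$. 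I would close this with a uniform bound $V(s,k)\le\max\{r(0,0)/(1-\beta),\,c_s\}$ --- a short value-iteration induction using that $r$ is maximized at $(0,0)$ and that the salvage value is nonnegative --- together with $V(0,k+2)\ge c_s$ (scrapping is always feasible). If $c_s\le r(0,0)/(1-\beta)$, these give $V(0,k+1)-V(0,k+2)\le r(0,0)/(1-\beta)-c_s$, so the hypothesis $\tfrac{\beta r(0,0)}{1-\beta}-\beta c_s\le r(s,k)-r(s,k+1)$ is exactly what forces $\Delta(s,k+1)-\Delta(s,k)\ge 0$; if instead $c_s> r(0,0)/(1-\beta)$, the same bound gives $V(0,k+1)\le c_s$, whence $w(s,k;1)=-c_r+\beta V(0,k+1)\le -c_r+\beta c_s< c_s=w(s,k;2)$ for all $s,k$, so remanufacturing is never optimal, $k^\ast=0$, and the statement is vacuous. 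The step I expect to be the main obstacle is precisely this last one --- pinning down the right a priori bound on $V$ and recognizing the case split that makes the hypothesis bind exactly at the threshold --- since everything else is the same nesting-of-level-sets argument as in part (b).
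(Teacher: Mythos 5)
Your proposal is correct and is essentially the paper's argument in contrapositive form: the paper assumes $a^\ast(s,k)=1$ but $a^\ast(s,k+1)=0$ and derives a contradiction from the very two bounds you use, namely $w(s,k;0)-w(s,k+1;0)\ge r(s,k)-r(s,k+1)$ (via the dual representation, $V$ monotone in $s$ and $k$, and first-order stochastic dominance of the nominal kernels) and $w(s,k;1)-w(s,k+1;1)=\beta\bigl(V(0,k+1)-V(0,k+2)\bigr)\le \tfrac{\beta r(0,0)}{1-\beta}-\beta c_s$ (via $V(0,k+2)\ge c_s$ and $V(0,k+1)\le r(0,0)/(1-\beta)$). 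Your direct monotonicity of the gap $\Delta(s,k)$ and your level-set argument for part (b) repackage, but do not change, the paper's reasoning.
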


\begin{proof}
	See Appendix A.4.
\end{proof}

The condition in Theorem \ref{thm:Monotone-K-KL}(a) is  restrictive. We will show that most violations do not change the monotone structure of \(\zeta_{\text{rm}}(k)\) and \(\zeta_{\text{scrap}}(k)\) in Section \ref{sec:violationTest} through some computational studies. 

\subsubsection{Solution Methodology for KL Distance Model}\label{sec:robustValueIter}

Model \eqref{Bellman} can be efficiently solved using robust value iteration as described below:

\begin{algorithm}[h]
	\caption{Robust Value Iteration}
	{\small{
			\begin{algorithmic}[1]
				\State \textbf{Initialization:}\par  
				\(\bar{V}(s,k), a^\ast(s,k) \leftarrow0,  V(s,k) \leftarrow M,  \forall (s,k)\in\mathcal{S}\times\mathcal{K}\),
				\(\epsilon > 0\)
				\While{\(||\bar{\pmb V}- \pmb V||\geq \frac{(1-\beta)\epsilon}{4\beta}\)}
				\State 	\(\pmb V \leftarrow \bar{\pmb V}\)
				\For {\((s,k)\in\mathcal{S}\times\mathcal{K}\)}
				\State \(\bar{V}(s,k)\leftarrow \max_{a\in A}w(s,k;a)\)
				\State \(a^\ast(s,k)\leftarrow \arg\max_{a\in A}w(s,k;a) \)
				\EndFor
				\EndWhile
				\State \Return \(\bar{\pmb V}, \pmb a^\ast\)
	\end{algorithmic}}}
	\label{alg01}
\end{algorithm}
To solve the inner problem \(w(s,k;0)\) in step 5, we can either employ a numerical search for the dual problem in Equation \eqref{reformulation-KL-a0}, or solve the primal problem as the following conic program:

\renewcommand{\baselinestretch}{0.0}
\begin{align}
w(s,k;0) & = r(s,k)+\min_{\pmb p(\cdot|s,k)\in\mathcal{U}_{sk}} \sum_{s^\prime\in\mathcal{S}}p(s^\prime|s,k)V(j,k) \nonumber\\
s.t. \quad & p(s^\prime|s,k) \geq 0, s^\prime\in\mathcal{S}, \sum_{s^\prime\in\mathcal{S}}p(s^\prime|s,k)  = 1 \nonumber\\
\quad &\sum_{s^\prime\in\mathcal{S}}p(s^\prime|s,k)\log \left(\dfrac{p(s^\prime|s,k)}{\hat{p}(s^\prime|s,k)}\right) \leq \theta \label{KL-conic-constraint01},
\end{align}
\renewcommand{\baselinestretch}{1.5}\\
where constraint (\ref{KL-conic-constraint01}) can be represented by the following exponential cone:
\begin{align*}
&\sum_{s^\prime\in\mathcal{S}}z(s^\prime) =  \theta,\\
&\big(\hat{p}(s^\prime|s,k(s^\prime), p(s^\prime|s,k), -z(s^\prime)\big) \in K_{\text{exp}}, s^\prime\in\mathcal{S},
\end{align*}
where \(K_{\text{exp}}=\{(x_1,x_2,x_3): x_1 \ge x_2e^{x_3/x_2}, x_2>0\}\) is the exponential cone in \(\mathbb{R}^3\).

\subsection{Remanufacturing Planning with Interval-Matrix-Based Ambiguity Sets}
In this section, we establish conditions that guarantee structural properties of the optimal robust policies of Model \eqref{Bellman} with ambiguity sets  constructed using the interval matrix model.  The interval matrix model describes the uncertainty on the rows of the transition matrices  in the form \(\mathcal{U}_{sk}=\{\pmb{p}(\cdot|s,k):\underline{p}(s'|s,k)\leq p(s'|s,k)\leq \bar{p}(s'|s,k), \sum_{s' \in \mathcal{S}}p(s'|s,k)=1\}\).  Note that since components in each row vector of a transition matrix are constrained by \(\pmb{p}(\cdot|s,k)^T\textbf{1}=1\), more effective lower and upper bounds of the transition probability of each state $(s,k)\in \mathcal{S}\times \mathcal{K}$ can be obtained. The effective upper bounds are
\begin{equation}
\label{upperbound}
\min\big\{\bar{p}(s'|s,k),1-\sum_{s''\in \mathcal{S}\backslash \{s'\}}\underline{p}(s''|s)\big\}, \forall s'\in \mathcal{S}.
\end{equation}
and the effective lower bounds are
\begin{equation}
\label{lowerbound}
\max\big\{\underline{p}(s'|s,k),1-\sum_{s''\in \mathcal{S}\backslash \{s'\}}\bar{p}(s''|s,k)\big\}, \forall s'\in \mathcal{S},
\end{equation}
The upper and lower bounds hereinafter are referred to the bounds defined in Equations (\ref{upperbound}) and (\ref{lowerbound}). We first establish conditions that ensure the value function is monotone and the worst-case transition matrices are IFR, and then examine the structures of the robust optimal remanufacturing planning policies.
\begin{proposition}
	\label{prop:Monotone-ValueFunction-S-IM}
	Consider Model \eqref{Bellman} with the ambiguity set constructed using the interval matrix model. 
	
	\begin{itemize}
		\item [\textup{(a)}] If the lower and upper bounds of all states \((s,k)\in \mathcal{S}\times\mathcal{K}\) satisfy  the following conditions,
		\begin{align}
		& \sum_{s''=0}^{i} \underline{p}(s''|s,k) \geq  \sum_{s''=0}^{i} \underline{p}(s''|s',k), \forall i \in \mathcal{S},  k\in \mathcal{K}, \text{if } s'\ge s \label{lowerboundConstraints-S}, \\
		& \sum_{s''=i}^{S} \bar{p}(s''|s,k) \leq  \sum_{s''=i}^{S} \bar{p}(s''|s',k), \forall i \in \mathcal{S}, k\in \mathcal{K}, \text{if } s'\ge s \label{upperboundConstraints-S}
		\end{align}
		
		then the value function \(V(s,k)\) is non-increasing in  \(s\) for all \(k \in \mathcal{K}\).
		\item[\textup{(b)}] If the lower and upper bounds of all states \((s,k)\in \mathcal{S}\times\mathcal{K}\) satisfy conditions (\ref{lowerboundConstraints-S}) and (\ref{upperboundConstraints-S}), and  the following conditions,
		\begin{align}
		& \sum_{s''=0}^{i} \underline{p}(s''|s,k) \geq  \sum_{s''=0}^{i} \underline{p}(s''|s,k'), \forall i \in \mathcal{S}, s\in\mathcal{S}, \text{if } k'\ge k  \label{lowerboundConstraints-K}\\
		& \sum_{s''=i}^{S} \bar{p}(s''|s,k) \leq  \sum_{s''=i}^{S} \bar{p}(s''|s,k'), \forall i \in \mathcal{S}, s\in\mathcal{S}, \text{if } k'\ge k  \label{upperboundConstraints-K}
		\end{align}
		then the value function \(V(s,k)\) is non-increasing for all \(k \in \mathcal{K}\).	
	\end{itemize}
	
\end{proposition}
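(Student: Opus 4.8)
The plan is to establish both parts by induction on the robust value iteration underlying Algorithm~\ref{alg01}: the robust Bellman operator $T$ associated with Model~\eqref{Bellman} is a $\beta$-contraction (standard under the rectangularity assumption), so it has a unique fixed point $V=\lim_{n\to\infty}T^nV_0$, and it suffices to show that $T$ maps the cone of functions that are non-increasing (in $s$, respectively in $(s,k)$) into itself, since that cone is closed under pointwise limits. I would start from $V_0\equiv\mathrm{const}$, which is trivially non-increasing.

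\emph{Part (a).} Suppose $V_n(\cdot,k)$ is non-increasing on $\mathcal S$ for every $k$. Write $V_{n+1}(s,k)=\max\{w_n(s,k;0),w_n(s,k;1),w_n(s,k;2)\}$. The terms $w_n(s,k;1)=-c_r+\beta V_n(0,k+1)$ and $w_n(s,k;2)=c_s$ do not depend on $s$; and $w_n(s,k;0)=r(s,k)+\beta\,h_n(s,k)$ with $h_n(s,k):=\min_{\pmb p(\cdot|s,k)\in\mathcal U_{sk}}\sum_{s'}p(s'|s,k)V_n(s',k)$, where $r(s,k)=g(s,k)-e(s,k)$ is non-increasing in $s$ by Assumption~\ref{asm:coststructure}(a). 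Hence, as a maximum of non-increasing functions, $V_{n+1}(\cdot,k)$ is non-increasing as soon as $h_n(\cdot,k)$ is. The latter is the content of a lemma I would isolate and prove: \emph{if $s'\ge s$ and the bounds satisfy \eqref{lowerboundConstraints-S}--\eqref{upperboundConstraints-S}, then $\min_{\pmb p\in\mathcal U_{s'k}}\sum_{s''}p(s'')f(s'')\le\min_{\pmb q\in\mathcal U_{sk}}\sum_{s''}q(s'')f(s'')$ for every non-increasing $f:\mathcal S\to\mathbb R$}; applying it with $f=V_n(\cdot,k)$ gives $h_n(s',k)\le h_n(s,k)$, closing the induction, and passing $n\to\infty$ gives the claim.

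The proof of that lemma is the main obstacle, and I would carry it out through the explicit worst-case (``water-filling from the top'') distribution. In any box-constrained simplex set $\mathcal U_{sk}$ there is a stochastically maximal element $\pmb p^{\ast}_{sk}$, obtained by greedily assigning to state $S$ its effective upper bound, then to $S-1$, and so on until the unit budget is exhausted (the remaining, lower, states then sitting at their effective lower bounds). One checks by induction on states that this vector is feasible, that its tail sums are $\sum_{s''\ge m}p^{\ast}_{sk}(s'')=\min\{\sum_{s''\ge m}\bar p(s''|s,k),\,1-\sum_{s''<m}\underline p(s''|s,k)\}$, and that $\sum_{s''\ge m}p^{\ast}_{sk}(s'')\ge\sum_{s''\ge m}q(s'')$ for every $\pmb q\in\mathcal U_{sk}$ and every $m$. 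Abel summation, $\sum_{s''}q(s'')f(s'')=f(0)+\sum_{m\ge 1}\big(f(m)-f(m-1)\big)\sum_{s''\ge m}q(s'')$ with each increment $f(m)-f(m-1)\le 0$, then shows $\pmb p^{\ast}_{sk}$ attains $\min_{\mathcal U_{sk}}\sum q\,f$. Finally, \eqref{lowerboundConstraints-S}--\eqref{upperboundConstraints-S} force $\sum_{s''\ge m}p^{\ast}_{s'k}(s'')\ge\sum_{s''\ge m}p^{\ast}_{sk}(s'')$ for all $m$ (compare the two tail expressions termwise: the $\bar p$-tails are larger for $s'$ by \eqref{upperboundConstraints-S}, and $1-\sum_{s''<m}\underline p(s''|s',k)\ge 1-\sum_{s''<m}\underline p(s''|s,k)$ by \eqref{lowerboundConstraints-S}), so one more application of Abel summation yields $\sum p^{\ast}_{s'k}f\le\sum p^{\ast}_{sk}f$, which is the lemma.

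\emph{Part (b).} Strengthen the hypothesis to ``$V_n$ non-increasing in $s$ for each $k$ and non-increasing in $k$ for each $s$.'' Monotonicity of $V_{n+1}$ in $s$ follows verbatim from part (a), since \eqref{lowerboundConstraints-S}--\eqref{upperboundConstraints-S} are still assumed. For monotonicity in $k$, fix $s$ and take $k'\ge k$: $w_n(s,k;2)=c_s$ is constant in $k$; $w_n(s,k;1)=-c_r+\beta V_n(0,k+1)$ is non-increasing in $k$ because $V_n(0,\cdot)$ is; and for $w_n(s,k;0)$, $r(s,k)$ is non-increasing in $k$ by Assumption~\ref{asm:coststructure}(a) while the worst-case term satisfies $\min_{\pmb p\in\mathcal U_{sk'}}\sum p\,V_n(\cdot,k')\le\min_{\pmb p\in\mathcal U_{sk'}}\sum p\,V_n(\cdot,k)\le\min_{\pmb q\in\mathcal U_{sk}}\sum q\,V_n(\cdot,k)$, the first inequality holding pointwise since $V_n(\cdot,k')\le V_n(\cdot,k)$ by the induction hypothesis, the second being the lemma applied in the $k$-direction with conditions \eqref{lowerboundConstraints-K}--\eqref{upperboundConstraints-K} and the non-increasing function $f=V_n(\cdot,k)$. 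Hence $w_n(s,\cdot;0)$ is non-increasing, so $V_{n+1}(s,\cdot)$ is, and $n\to\infty$ finishes. Beyond part (a) this step is only bookkeeping; essentially all the work lives in the water-filling lemma, in particular in verifying that the greedy distribution is simultaneously maximal at every tail level and that the effective-bound orderings translate into termwise domination of tails.
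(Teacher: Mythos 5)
Your proposal is correct and follows essentially the same route as the paper's proof: induction on robust value iteration, an explicit ``water-filling from the top'' characterization of the worst-case distribution in the box-constrained simplex, a tail-sum comparison showing that conditions \eqref{lowerboundConstraints-S}--\eqref{upperboundConstraints-K} make the worst-case kernels stochastically ordered in $s$ (resp.\ $k$), and then first-order stochastic dominance against the non-increasing iterate. The only cosmetic differences are that the paper establishes optimality of the greedy distribution by an exchange argument (Lemma~\ref{lemma:lemmaA01}) and invokes Lemma 4.7.2 of \cite{puterman2014markov} where you use Abel summation directly.
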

\begin{proof}
	See Appendix A.5.
\end{proof}

Proposition \ref{prop:Monotone-ValueFunction-S-IM} provides a set of conditions sufficient to guarantee the structure of the value function and the worst-case transition matrices, which are critical to the existence of control-limit robust optimal policies. These conditions are generally not restrictive, and are expected to be roughly satisfied. Conditions (\ref{lowerboundConstraints-S}) and (\ref{upperboundConstraints-S}) ensure that the worst case transition matrices are IFR, and conditions (\ref{lowerboundConstraints-K}) and (\ref{upperboundConstraints-K}) guarantee that the worst transition matrix $\pmb{P}^\ast(\cdot|\cdot,k+1)$ 
dominates the worst transition matrix $\pmb P^\ast(\cdot|\cdot,k)$ for all \(k\in \mathcal{K}\). 

In the proof of Proposition \ref{prop:Monotone-ValueFunction-S-IM}, we also obtain the transition behaviors of worst-case transition probability matrices when the decision is to wait, which are summarized in the following corollaries.
\begin{corollary}
	\label{cor:worstcaseditr-IM}
	For Model  \eqref{Bellman} with the ambiguity set constructed using the interval matrix model, if the lower and upper bounds of all states \((s,k)\in \mathcal{S}\times\mathcal{K}\) satisfy  conditions (\ref{lowerboundConstraints-S}) and (\ref{upperboundConstraints-S}), then  the worst-case transition matrices are:
	\begin{equation}
	p^*(s'|s,k)=
	\left\{
	\begin{array}{ll}
	\bar{p}(s'|s,k), & s' > \delta_{sk},\\
	\underline{p}(s'|s,k), & s' < \delta_{sk}, \\
	1-\sum_{s'=1}^{\delta_{sk}-1}\underline{p}(s'|s,k) -\sum_{s'=\delta_{sk}+1}^{S}\bar{p}(s'|s,k)  , & s' = \delta_{sk},\\
	\end{array}
	\right.
	\end{equation} 
	where \(\delta_{sk}=\min\{\delta\in\mathcal{S}:\sum_{s'=0}^{\delta}\underline{p}(s'|s,k)+\sum_{s'=\delta+1}^{S}\bar{p}(s'|s,k) \leq 1\}\) for all \(s\) and \(k\).	
\end{corollary}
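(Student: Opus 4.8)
The plan is to treat the minimization inside \(w(s,k;0)\) as a linear program over the polytope \(\mathcal{U}_{sk}\) and to exhibit its optimal vertex explicitly. Fix \(s\) and \(k\); the inner problem is \(\min\{\sum_{s'\in\mathcal{S}}p(s'|s,k)V(s',k):\underline{p}(s'|s,k)\le p(s'|s,k)\le\bar{p}(s'|s,k),\ \sum_{s'\in\mathcal{S}}p(s'|s,k)=1\}\), a linear objective over a nonempty compact set, so a minimizer exists. Since the hypotheses here are exactly conditions \eqref{lowerboundConstraints-S}--\eqref{upperboundConstraints-S}, Proposition \ref{prop:Monotone-ValueFunction-S-IM}(a) gives the ordering \(V(0,k)\ge V(1,k)\ge\cdots\ge V(S,k)\). (In the write-up this is carried out inductively along the value-iteration iterates, so that the monotonicity invoked at each step is the one just established for the current iterate; this also dispels the apparent circularity between this corollary and Proposition \ref{prop:Monotone-ValueFunction-S-IM}.) The intuition is the usual mass-transport heuristic: because larger states carry weakly smaller \(V\), any minimizing distribution pushes as much probability as the box constraints permit onto the highest states, and \(\delta_{sk}\) is precisely the index at which this budget runs out. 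Rather than argue by a pairwise exchange, I would simply verify that the stated \(\pmb{p}^\ast\) is feasible and optimal.

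For feasibility, membership in the box is immediate for \(s'\neq\delta_{sk}\). For \(s'=\delta_{sk}\), the lower bound \(p^\ast(\delta_{sk}|s,k)\ge\underline{p}(\delta_{sk}|s,k)\) is exactly the inequality \(\sum_{s'=0}^{\delta_{sk}}\underline{p}(s'|s,k)+\sum_{s'=\delta_{sk}+1}^{S}\bar{p}(s'|s,k)\le 1\) that defines \(\delta_{sk}\), while the upper bound \(p^\ast(\delta_{sk}|s,k)\le\bar{p}(\delta_{sk}|s,k)\) follows from the \emph{minimality} of \(\delta_{sk}\), i.e.\ from the fact that \(\delta=\delta_{sk}-1\) fails the defining inequality (the boundary case \(\delta_{sk}=0\) being handled by nonemptiness of \(\mathcal{U}_{sk}\), namely \(\sum_{s'\in\mathcal{S}}\bar{p}(s'|s,k)\ge 1\)). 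The normalization holds by construction. This step is the crux of the argument, since the two bound checks must be read off from the two different "halves" of the definition of \(\delta_{sk}\); everything else is bookkeeping, including the degenerate cases \(\delta_{sk}=0\) and \(\sum_{s'}\underline{p}(s'|s,k)=1\).

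For optimality, take any feasible \(\pmb{p}\) and set \(\Delta(s')=p(s'|s,k)-p^\ast(s'|s,k)\). Then \(\sum_{s'}\Delta(s')=0\); moreover \(\Delta(s')\ge 0\) for \(s'<\delta_{sk}\) (there \(p^\ast\) sits at its lower bound) and \(\Delta(s')\le 0\) for \(s'>\delta_{sk}\) (there \(p^\ast\) sits at its upper bound). Using \(\sum_{s'}\Delta(s')=0\) to subtract \(V(\delta_{sk},k)\sum_{s'}\Delta(s')\), the objective gap becomes
\[
\sum_{s'\in\mathcal{S}}\Delta(s')V(s',k)=\sum_{s'<\delta_{sk}}\Delta(s')\big(V(s',k)-V(\delta_{sk},k)\big)+\sum_{s'>\delta_{sk}}\Delta(s')\big(V(s',k)-V(\delta_{sk},k)\big),
\]
and by the ordering of \(V\) each summand is a product of two factors of the same sign, hence nonnegative. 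Therefore \(\sum_{s'}p(s'|s,k)V(s',k)\ge\sum_{s'}p^\ast(s'|s,k)V(s',k)\), so \(\pmb{p}^\ast\) attains the minimum, which is the claim. The only nontrivial ingredient is the feasibility verification at \(s'=\delta_{sk}\) described above; the rest is routine.
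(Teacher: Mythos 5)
Your proof is correct. It establishes the same greedy characterization as the paper, but the paper's route is to reduce the corollary to Lemma \ref{lemma:lemmaA01} (stated for a generic non-increasing \(\nu\)) and to prove that lemma by a pairwise exchange argument: if a purported optimum puts less than \(\bar p(i|s)\) on some \(i>\delta_s\), mass is shifted from some \(j\le\delta_s\) to \(i\) without increasing the objective. Your optimality step instead compares \(\pmb p^\ast\) directly against an arbitrary feasible \(\pmb p\) via the signed differences \(\Delta(s')\) and the pivot \(V(\delta_{sk},k)\); this is a cleaner one-shot argument that avoids having to iterate the exchange (the paper's "contradiction" as written only shows the perturbed solution is no worse, so strictly one must repeat it to reach \(\pmb p^\ast\)). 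Two things you supply that the paper omits are worth keeping: (i) the feasibility check at \(s'=\delta_{sk}\), where the lower bound comes from the defining inequality of \(\delta_{sk}\) and the upper bound from its minimality (with \(\sum_{s'}\bar p(s'|s,k)\ge 1\) covering \(\delta_{sk}=0\)) --- the paper's Lemma \ref{lemma:lemmaA01} proof silently assumes \(\pmb p^\ast\in\mathcal{U}_{sk}\); and (ii) the explicit resolution of the apparent circularity with Proposition \ref{prop:Monotone-ValueFunction-S-IM}, which matches how the paper actually organizes the induction along the value-iteration iterates. One cosmetic note: the corollary as printed starts the middle sum at \(s'=1\); your normalization argument (and Lemma \ref{lemma:lemmaA01}) confirms it should start at \(s'=0\).
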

\begin{proof}
	See Appendix A.6.
\end{proof}

Let $\pmb{P}^\ast(\cdot|\cdot,k)$ denote the worst-case transition matrices in Corollary \ref{cor:worstcaseditr-IM}. Corollary \ref{cor:worstcaseditr-structure-IM} summarizes the structure of $\pmb{P}^\ast(\cdot|\cdot,k)$.

\begin{corollary}
	\label{cor:worstcaseditr-structure-IM}
	$\pmb{P}^\ast(\cdot|\cdot,k)$ is IFR for all \(k\in \mathcal{K}\); if \(k_1>k_2\) (\(k_1, k_2 \in \mathcal{K}\)), \(\pmb{P}^\ast(\cdot|\cdot,k_1) \succeq \pmb{P}^\ast(\cdot|\cdot,k_2)\).
\end{corollary}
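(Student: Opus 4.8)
The plan is to derive both conclusions from a single closed form for the tail masses of the worst-case matrix. Write $F^\ast(m;s,k):=\sum_{i=m}^{S}p^\ast(i|s,k)$, where $p^\ast(\cdot|s,k)$ is the distribution exhibited in Corollary~\ref{cor:worstcaseditr-IM}. The first step is to evaluate this sum by substituting the three-case expression for $p^\ast(\cdot|s,k)$ and splitting on whether $m>\delta_{sk}$ or $m\le\delta_{sk}$. Using the value of the residual mass at $\delta_{sk}$ together with the defining minimality of $\delta_{sk}$ (applied to $\delta_{sk}$ and to $\delta_{sk}-1$, which in particular yields $\underline p(\delta_{sk}|s,k)\le p^\ast(\delta_{sk}|s,k)\le\bar p(\delta_{sk}|s,k)$), I expect to obtain, for every $m\in\mathcal{S}$,
\[
F^\ast(m;s,k)=\min\Bigl\{\,\sum_{i=m}^{S}\bar p(i|s,k),\ 1-\sum_{i=0}^{m-1}\underline p(i|s,k)\,\Bigr\},
\]
which simply says that $p^\ast(\cdot|s,k)$ is the stochastically largest member of $\mathcal{U}_{sk}$, so its tails are as heavy as the interval bounds permit. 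Checking which of the two terms is active on each of the two regimes, and hence that the formula is internally consistent, is the most delicate bookkeeping in the argument, so I regard this step as the main obstacle; once it is in place the rest is routine.

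For the IFR claim I fix $m$ and compare $F^\ast(m;s,k)$ with $F^\ast(m;s',k)$ for $s'\ge s$ (the case $m=0$ being trivial since the tail is identically $1$). Condition~\eqref{upperboundConstraints-S} with index $m$ gives $\sum_{i=m}^{S}\bar p(i|s,k)\le\sum_{i=m}^{S}\bar p(i|s',k)$, and condition~\eqref{lowerboundConstraints-S} with index $m-1$ gives $\sum_{i=0}^{m-1}\underline p(i|s,k)\ge\sum_{i=0}^{m-1}\underline p(i|s',k)$, hence $1-\sum_{i=0}^{m-1}\underline p(i|s,k)\le 1-\sum_{i=0}^{m-1}\underline p(i|s',k)$. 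Both arguments of the $\min$ are non-decreasing in $s$, so $F^\ast(m;s,k)$ is non-decreasing in $s$ for every $m$, which is exactly Definition~\ref{definition01}(a) applied to $\pmb{P}^\ast(\cdot|\cdot,k)$.

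For the dominance claim I run the identical computation in the remanufacturing-count variable: for $k_1>k_2$, conditions~\eqref{upperboundConstraints-K} and \eqref{lowerboundConstraints-K} give $\sum_{i=m}^{S}\bar p(i|s,k_2)\le\sum_{i=m}^{S}\bar p(i|s,k_1)$ and $1-\sum_{i=0}^{m-1}\underline p(i|s,k_2)\le 1-\sum_{i=0}^{m-1}\underline p(i|s,k_1)$ for all $s,m$, whence $F^\ast(m;s,k_2)\le F^\ast(m;s,k_1)$; by Definition~\ref{definition01}(b) this is precisely $\pmb{P}^\ast(\cdot|\cdot,k_1)\succeq\pmb{P}^\ast(\cdot|\cdot,k_2)$. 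Throughout I take the standing hypotheses of Proposition~\ref{prop:Monotone-ValueFunction-S-IM}(b) — conditions \eqref{lowerboundConstraints-S}--\eqref{upperboundConstraints-K} — to be in force, since these are exactly what make the worst-case matrices of Corollary~\ref{cor:worstcaseditr-IM} well defined and simultaneously comparable across $s$ and $k$; notably, no property of the value function itself is needed beyond the explicit structure already recorded in Corollary~\ref{cor:worstcaseditr-IM}.
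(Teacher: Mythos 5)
Your proof is correct and takes essentially the same route as the paper's: the paper establishes exactly this tail-sum comparison inside the proof of Proposition \ref{prop:Monotone-ValueFunction-S-IM} (Appendix A.5), by splitting on whether the tail index lies above or below \(\delta_{sk}\) and invoking conditions \eqref{lowerboundConstraints-S}--\eqref{upperboundConstraints-K}. Your only departure is packaging the two cases into the single identity \(F^\ast(m;s,k)=\min\bigl\{\sum_{i\ge m}\bar p(i|s,k),\,1-\sum_{i<m}\underline p(i|s,k)\bigr\}\) --- which is indeed valid, the active branch being determined by the minimality of \(\delta_{sk}\) --- after which the monotonicity in \(s\) and \(k\) reads off just as in the paper.
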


Next we show that similar to optimal robust remanufacturing policies for model with KL-distance-based ambiguity sets, the optimal robust policies for model with interval-matrix-based ambiguity sets  exhibit the same control-limit structure with respect to \(s\) and \(k\) under some conditions.

\begin{theorem} 
	\label{thm:Monotone-S-IM} 
	For Model \eqref{Bellman} with the ambiguity set constructed by the interval matrix model, if the lower and upper bounds of all states \((s,k)\in \mathcal{S}\times\mathcal{K}\) satisfy  conditions (\ref{lowerboundConstraints-S}) to (\ref{upperboundConstraints-K}), then there exist a  cumulative number of remanufacturings \(k^\ast \in \mathcal{K}\) and an  operational state \(s_k \in \mathcal{S}\) such that for \(k <k^\ast\)
	\begin{equation*}
	a(s,k)=\Bigg\{\begin{array}{ll}
	0 & \text{if } s < s_k,\\
	1 & \text{if } s\geq s_k,
	\end{array}
	\end{equation*}
	and for \(k\geq k^\ast\)
	\begin{equation*}
	a(s,k)=\Bigg\{\begin{array}{ll}
	0 & \text{if } s < s_k,\\
	2 & \text{if } s\geq s_k.
	\end{array}
	\end{equation*}
\end{theorem}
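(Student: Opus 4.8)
The plan is to mimic the structure of the proof of Theorem~\ref{thm:Monotone-S-KL} (the KL-distance case), but to replace the abstract inf over the ambiguity set with the explicit worst-case transition matrix $\pmb{P}^\ast(\cdot|\cdot,k)$ furnished by Corollary~\ref{cor:worstcaseditr-IM}. Because the worst-case kernel is now an explicit, fixed stochastic matrix, Model~\eqref{Bellman} with the interval-matrix ambiguity set collapses, for the purpose of policy structure, to an ordinary (non-robust) MDP whose transition law is $\pmb{P}^\ast$ when $a=0$, with the same deterministic transitions under $a=1$ (to $(0,k+1)$) and $a=2$ (absorbing reward $c_s$). The key ingredients we will have in hand are: (i) from Proposition~\ref{prop:Monotone-ValueFunction-S-IM}, $V(s,k)$ is non-increasing in $s$ and in $k$ under conditions \eqref{lowerboundConstraints-S}--\eqref{upperboundConstraints-K}; and (ii) from Corollary~\ref{cor:worstcaseditr-structure-IM}, $\pmb{P}^\ast(\cdot|\cdot,k)$ is IFR for every $k$ and is monotone in $k$ in the $\succeq$ order.

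First I would fix $k$ and show the $s$-monotonicity of the decision. Define $w(s,k;0)-w(s,k;1) = \beta\big[\sum_{s'}p^\ast(s'|s,k)V(s',k) - V(0,k+1)\big] + r(s,k)+c_r$, and similarly compare $a=0$ against $a=2$. The crux is that $\sum_{s'}p^\ast(s'|s,k)V(s',k)$ is non-increasing in $s$: this follows from the IFR property of $\pmb{P}^\ast(\cdot|\cdot,k)$ (Corollary~\ref{cor:worstcaseditr-structure-IM}) together with the fact that $V(\cdot,k)$ is non-increasing (Proposition~\ref{prop:Monotone-ValueFunction-S-IM}(a)), via the standard result that integrating a non-increasing function against a stochastically increasing family of distributions yields a non-increasing function. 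Also $r(s,k)$ is non-increasing in $s$ by Assumption~\ref{asm:coststructure}(a). Hence $w(s,k;0)$ is non-increasing in $s$, while $w(s,k;1)=-c_r+\beta V(0,k+1)$ and $w(s,k;2)=c_s$ are constant in $s$. Therefore the set $\{s: w(s,k;0)\le \max(w(s,k;1),w(s,k;2))\}$ is up-closed, i.e.\ of the form $\{s\ge s_k\}$, which gives the control-limit structure in $s$ for each fixed $k$. To pin down that the non-wait action is ``remanufacture'' for small $k$ and ``scrap'' for large $k$, I would compare $w(s,k;1)$ and $w(s,k;2)$: using $V(0,k+1)$ non-increasing in $k$ (Proposition~\ref{prop:Monotone-ValueFunction-S-IM}(b)), the quantity $w(s,k;1)-w(s,k;2) = -c_r+\beta V(0,k+1)-c_s$ is non-increasing in $k$, so there is a threshold $k^\ast$ below which $1$ dominates $2$ and at/above which $2$ dominates $1$; combined with Assumption~\ref{asm:coststructure}(b), which rules out $a=0$ at state $S$, this yields exactly the two-regime statement.

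The main obstacle I anticipate is the bookkeeping needed to guarantee that the ``non-wait'' action is pure within each regime rather than switching between $1$ and $2$ as $s$ varies at fixed $k$ — in principle the $\arg\max$ over $\{1,2\}$ could flip with $s$. This is handled by the observation that $w(s,k;1)$ and $w(s,k;2)$ are both independent of $s$, so their comparison depends only on $k$; once $k$ is fixed the better of the two is determined, and the $s$-threshold separates it cleanly from $a=0$. A secondary, more technical point is justifying the ``stochastically larger distributions integrate non-increasing functions to smaller values'' step in the presence of the absorbing state $S$ and the upper-triangular structure; but since the $\succeq$ order in Definition~\ref{definition01}(b) is exactly first-order stochastic dominance on $\{0,\dots,S\}$, this is the textbook monotonicity lemma and requires no extra hypotheses beyond what Corollary~\ref{cor:worstcaseditr-structure-IM} already provides. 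Finally, I would note that the degenerate case $k^\ast=0$ (remanufacturing never optimal) and the convention $\zeta_{\textup{rm}},\zeta_{\textup{scrap}}$ are inherited verbatim from the discussion following Theorem~\ref{thm:Monotone-S-KL}, so no separate argument is needed there.
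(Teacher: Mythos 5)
Your proof is correct and follows essentially the same route as the paper, whose own proof of this theorem is simply a pointer to the argument given for the KL case: both rest on the monotonicity of \(V(s,k)\) in \(s\) and \(k\) (Proposition \ref{prop:Monotone-ValueFunction-S-IM}), the IFR/dominance structure of the worst-case kernels, the fact that \(w(s,k;1)\) and \(w(s,k;2)\) do not depend on \(s\), and Assumption \ref{asm:coststructure}(b) to rule out waiting at state \(S\). The only mechanical difference is that the paper deduces up-closedness of the stopping region by a sandwich argument (\(w(s',k;a)=w(s,k;a)=V(s,k)\ge V(s',k)\ge w(s',k;a)\) for \(a\in\{1,2\}\) and \(s'>s\)) rather than by showing \(w(s,k;0)\) is non-increasing in \(s\) directly, and it obtains the threshold \(k^\ast\) by showing the scrap-optimal set is up-closed in \(k\) rather than by your direct comparison of \(-c_\text{r}+\beta V(0,k+1)\) with \(c_\text{s}\); both variants use identical ingredients and reach the same conclusion.
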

\begin{proof}
	See Appendix A.7.
\end{proof}

\begin{theorem} 
		\label{thm:Monotone-K-IM} 
	 For Model \eqref{Bellman} with the ambiguity set constructed using the interval matrix model, if the lower and upper bounds of all states \((s,k)\in \mathcal{S}\times\mathcal{K}\) satisfy  conditions \eqref{lowerboundConstraints-S} to \eqref{upperboundConstraints-K}, then the following holds:
		\begin{itemize}
			\item [\textup{(a)}] 
					
			If
			\(
			\dfrac{\beta r(0,0)}{1-\beta}-\beta c_\text{s} \leq r(s,k) - r(s,k+1),
			\)
			is satisfied, \(\zeta_{\textup{rm}}(k)\) is non-increasing in \(k,k<k^*\).
			\item [\textup{(b)}] \(\zeta_{\textup{scrap}}(k)\) is non-increasing in \(k, k\ge k^*\).
		\end{itemize}
	\end{theorem}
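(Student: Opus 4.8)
I would follow the template of the proof of Theorem~\ref{thm:Monotone-K-KL}, with the KL worst-case kernel replaced by the interval-matrix worst-case kernel of Corollary~\ref{cor:worstcaseditr-IM}. Throughout, conditions \eqref{lowerboundConstraints-S}--\eqref{upperboundConstraints-K} are assumed, so Proposition~\ref{prop:Monotone-ValueFunction-S-IM}, Corollary~\ref{cor:worstcaseditr-IM}, Corollary~\ref{cor:worstcaseditr-structure-IM}, and Theorem~\ref{thm:Monotone-S-IM} all apply. By Theorem~\ref{thm:Monotone-S-IM}, for $k<k^\ast$ the set of states at which remanufacturing is optimal has the form $\{s:s\ge\zeta_{\textup{rm}}(k)\}$, and this set equals $\{s:w(s,k;1)\ge w(s,k;0)\}$ since for $k<k^\ast$ only actions $0$ and $1$ occur; likewise, for $k\ge k^\ast$ the scrapping set is $\{s:s\ge\zeta_{\textup{scrap}}(k)\}=\{s:c_\text{s}\ge w(s,k;0)\}$. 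Both sets are thus governed by $w(s,k;0)$ and $w(s,k;1)$, so the proof reduces to tracking how these two quantities move when $k$ increases by one.

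The central step I would establish is
\begin{equation*}
w(s,k;0)-w(s,k+1;0)\ \ge\ r(s,k)-r(s,k+1)\ \ge\ 0\qquad\text{for all }s\in\mathcal{S},\ k\in\mathcal{K}.
\end{equation*}
Decompose $w(s,k;0)-w(s,k+1;0)=\big(r(s,k)-r(s,k+1)\big)+\beta\big(\inf_{\pmb{p}\in\mathcal{U}_{sk}}\sum_{s'}p(s')V(s',k)-\inf_{\pmb{p}\in\mathcal{U}_{s,k+1}}\sum_{s'}p(s')V(s',k+1)\big)$. The first bracket is nonnegative by Assumption~\ref{asm:coststructure}(a). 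For the second, observe first that $V(\cdot,k+1)\le V(\cdot,k)$ pointwise (Proposition~\ref{prop:Monotone-ValueFunction-S-IM}(b)), so $\inf_{\mathcal{U}_{s,k+1}}\sum_{s'}p(s')V(s',k+1)\le\inf_{\mathcal{U}_{s,k+1}}\sum_{s'}p(s')V(s',k)$; and observe second that the worst-case row of Corollary~\ref{cor:worstcaseditr-IM} depends only on the interval bounds of the state in question and minimizes $\sum_{s'}p(s')f(s')$ over the corresponding ambiguity set for \emph{every} non-increasing $f$, so these two infima are attained at $\pmb{p}^\ast(\cdot|s,k+1)$ and $\pmb{p}^\ast(\cdot|s,k)$. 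Since $\pmb{P}^\ast(\cdot|\cdot,k+1)\succeq\pmb{P}^\ast(\cdot|\cdot,k)$ (Corollary~\ref{cor:worstcaseditr-structure-IM}) and $V(\cdot,k)$ is non-increasing, the stochastic-order inequality attached to Definition~\ref{definition01}(b) gives $\sum_{s'}p^\ast(s'|s,k+1)V(s',k)\le\sum_{s'}p^\ast(s'|s,k)V(s',k)$; chaining the two observations makes the second bracket nonnegative, proving the display.

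Part (b) is then immediate: the display gives $w(s,k+1;0)\le w(s,k;0)$, so for $k\ge k^\ast$ the set $\{s:c_\text{s}\ge w(s,k;0)\}$ can only enlarge as $k$ grows, i.e.\ $\zeta_{\textup{scrap}}(k+1)\le\zeta_{\textup{scrap}}(k)$. For part (a), I would additionally control $w(s,k;1)-w(s,k+1;1)=\beta\big(V(0,k+1)-V(0,k+2)\big)$. Because scrapping is always admissible, $V\ge c_\text{s}$ everywhere, so $V(0,k+2)\ge c_\text{s}$; and in the only non-vacuous case $k^\ast\ge 1$ (if $k^\ast=0$ there is nothing to prove) one checks that remanufacturing beating scrapping at level $k=0$ forces $c_\text{s}<r(0,0)/(1-\beta)$, whence the standard uniform bound yields $V\le r(0,0)/(1-\beta)$ and hence $V(0,k+1)-V(0,k+2)\le r(0,0)/(1-\beta)-c_\text{s}$. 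Combining this with the hypothesis of part (a) and the central inequality,
\begin{equation*}
w(s,k;0)-w(s,k+1;0)\ \ge\ r(s,k)-r(s,k+1)\ \ge\ \frac{\beta r(0,0)}{1-\beta}-\beta c_\text{s}\ \ge\ \beta\big(V(0,k+1)-V(0,k+2)\big)=w(s,k;1)-w(s,k+1;1),
\end{equation*}
so $w(s,k;0)-w(s,k;1)\ge w(s,k+1;0)-w(s,k+1;1)$ for every $s$; hence for $k+1<k^\ast$ the set $\{s:w(s,k;1)\ge w(s,k;0)\}$ can only enlarge with $k$, giving $\zeta_{\textup{rm}}(k+1)\le\zeta_{\textup{rm}}(k)$.

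The step I expect to be the main obstacle is the second observation in the central step: one must reread the proof of Corollary~\ref{cor:worstcaseditr-IM} to confirm that its explicit worst-case row remains optimal when the integrand is $V(\cdot,k)$ rather than $V(\cdot,k+1)$, and then combine this with the cross-$k$ dominance of Corollary~\ref{cor:worstcaseditr-structure-IM}. The uniform bound on $V$ and the handling of the degenerate case $k^\ast=0$ in part (a) are a secondary technicality.
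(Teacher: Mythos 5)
Your argument is correct and is essentially the paper's own proof: the paper proves this theorem by adapting the contradiction argument of Theorem~\ref{thm:Monotone-K-KL}, and your central inequality $w(s,k;0)-w(s,k+1;0)\ge r(s,k)-r(s,k+1)$ together with the bound $w(s,k;1)-w(s,k+1;1)\le \beta r(0,0)/(1-\beta)-\beta c_\text{s}$ is exactly that argument recast as a direct inequality chain, with the KL dual step replaced (as the paper intends) by the explicit interval-matrix worst-case rows of Corollary~\ref{cor:worstcaseditr-IM} and the dominance of Corollary~\ref{cor:worstcaseditr-structure-IM}. Your handling of the uniform bound $V(0,k+1)\le r(0,0)/(1-\beta)$ is, if anything, more careful than the paper's, which asserts it without comment.
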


\begin{proof}
	See Appendix A.8.
\end{proof}

\subsubsection{Solution Methodology for Interval Matrix Model}
Model \eqref{Bellman} with the ambiguity set constructed using the interval matrix model can be efficiently solved by the robust value iteration algorithm  in Section \ref{sec:robustValueIter}, where the dual problem of the inner problem $w(s,k;0)$ is give by:
\begin{align*}
	w(s,k;0)=\max_{\lambda} & \bigg\{r(s,k)+\sum_{s'\in\mathcal{S}}\beta V(s^\prime,k)\bar{p}(s^\prime|s,k)+\lambda \sum_{s^\prime\in\mathcal{S}}\left(1-\bar{p}(s^\prime|s,k)\right) \\
	&+ \sum_{s^\prime\in\mathcal{S}}\left(\beta V(s^\prime,k)-\lambda\right)^+\left(\underline{p}(s'|s,k)-\bar{p}(s'|s,k)\right)\bigg\},
\end{align*}
 where \(\left(\beta V(s^\prime,k)-\lambda\right)^+=\max\{\beta V(s^\prime,k)-\lambda, 0\}\). Since this dual problem is piecewise linear  on $\lambda$ with break points $\beta V(s^\prime,k)$, $\forall s' \in \mathcal{S}$, the optimality is obtained at one of the break points. Suppose \(\lambda^\ast = \beta V(s'',k)\) for \(s''\in\mathcal{S}\). Then, the complementary slackness suggests \(p^\ast(s'|s,k)=\underline{p}(s'|s,k)\) if \(V(s',k)>V(s'',k)\),  \(p^\ast(s'|s,k)=\bar{p}(s'|s,k)\) if \(V(s',k)<V(s'',k)\), and \(p^\ast(s''|s,k)=1-\sum_{s'\in\mathcal{S}\backslash\{s''\}}p^\ast(s'|s,k)\). Corollary \ref{cor:worstcaseditr-IM}
follows when \(V(s',k)\) is non-increasing in \(s'\in\mathcal{S}\) for any given \(k\).

\subsection{Sensitivity Analysis } \label{sec:sensAnalysis}
Consider two problem instances (\(\Lambda_1\) and \(\Lambda_2\)) with ambiguity sets constructed  using the KL distance or the interval matrix model. Assume that these two problem instances satisfy the following: (1)  reward functions are the same, (2) the ambiguity set of problem \(\Lambda_1\) is contained in its counterpart of problem \(\Lambda_2\) (i.e, \(\mathcal{U}_{sk}^1 \subseteq \mathcal{U}_{sk}^2\)), and (3) all conditions that are needed to ensure the control-limit structure of the optimal robust policies are satisfied. Theorem \ref{thm:sensitivity} addresses the relationship between the optimal robust policies of these two systems. Let \(\zeta_{\text{rm}}^i(k)\) and \(\zeta_{\text{scrap}}^i(k)\) denote the remanufacturing and scrap limits of problem \(\Lambda_i,i=1,2,\) given \(k\), respectively. Let \(k_i^*\) be the same threshold defined in Theorems \ref{thm:Monotone-S-KL} and \ref{thm:Monotone-S-IM}.	We show that given \(k\), the remanufacturing thereshold in problem \(\Lambda_2\), \(\zeta_{\text{rm}}^2(k)\) is higher than or the same as its counter part in problem \(\Lambda_2\). In contrast, problem \(\Lambda_2\) has a higher scrap control limit given \(k\).  This shows that a decision maker needs to be  more conservative about initiating a remanufacturing process in anticipation of more transition uncertainties and that decision makers should consider scrapping early to receive the terminal rewards, to hedge against uncertainties in future operational gains. We summarize our results in the following theorem.
\begin{theorem}
	\label{thm:sensitivity}
	Let  \(\Lambda_1\) and \(\Lambda_2\) be two problem instances defined in this section. Then, the following holds.
	\begin{enumerate}
		\item [\textup{(a)}] for \(k<k_1^*\), \(\zeta^1_{\textup{rm}}(k) \leq \zeta^2_{\textup{rm}} (k)\) if $p_1^\ast(\tilde{s}|\tilde{s},k) = 0$, where \(\tilde{s} = \zeta^1_{\textup{rm}}(k)-1\).
		\item [\textup{(b)}] for \(k\ge k_1^*\), \(\zeta^1_{\textup{scrap}} (k)\geq \zeta^2_{\textup{scrap}} (k)\)
		\item [\textup{(c)}] $k_1^* \ge k_2^*$.  
	\end{enumerate}
\end{theorem}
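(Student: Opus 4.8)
The plan is to reduce all three parts to a single comparison lemma: \(V_1(s,k)\ge V_2(s,k)\) for every \((s,k)\in\mathcal{S}\times\mathcal{K}\). Writing \(T_i\) for the robust Bellman operator of \(\Lambda_i\) (the right-hand side of \eqref{Bellman} with the ambiguity set \(\mathcal{U}^i\)), each \(T_i\) is monotone, and since \(\mathcal{U}_{sk}^1\subseteq\mathcal{U}_{sk}^2\) and transition rows are non-negative, \(\inf_{\pmb p\in\mathcal{U}_{sk}^1}\sum_{s'}p(s'|s,k)V(s',k)\ge\inf_{\pmb p\in\mathcal{U}_{sk}^2}\sum_{s'}p(s'|s,k)V(s',k)\); because the ``remanufacture'' and ``scrap'' branches of \eqref{Bellman} do not involve the ambiguity set, \(T_1V\ge T_2V\) pointwise for every \(V\). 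Hence \(T_2V_1\le T_1V_1=V_1\), so \(T_2^{\,n}V_1\le V_1\) for all \(n\) by monotonicity, and letting \(n\to\infty\) (\(T_2\) being a contraction) gives \(V_2\le V_1\). Part (c) is then immediate: for each \(i\), \(k_i^\ast=\min\{k\in\mathcal{K}:\,c_s\ge -c_r+\beta V_i(0,k+1)\}\) with ties resolved toward scrapping, and this set is closed upward because \(V_i(0,\cdot)\) is non-increasing (Propositions \ref{prop:Monotone-ValueFunction-KL} and \ref{prop:Monotone-ValueFunction-S-IM}); since \(V_2(0,k+1)\le V_1(0,k+1)\), every \(k\ge k_1^\ast\) also satisfies \(c_s\ge -c_r+\beta V_2(0,k+1)\), i.e.\ \(k\ge k_2^\ast\), so \(k_1^\ast\ge k_2^\ast\).

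For part (b), fix \(k\ge k_1^\ast\); by (c) also \(k\ge k_2^\ast\), so at level \(k\) only ``wait'' and ``scrap'' are attained in both problems and \(\zeta_{\textup{scrap}}^i(k)=\min\{s:\,c_s\ge w_i(s,k;0)\}\). At \(s=\zeta_{\textup{scrap}}^1(k)\) we have \(c_s\ge w_1(s,k;0)\); using \(\mathcal{U}_{sk}^1\subseteq\mathcal{U}_{sk}^2\) together with \(V_2\le V_1\) yields \(w_2(s,k;0)\le w_1(s,k;0)\le c_s\), so scrapping is optimal for \(\Lambda_2\) at \(s\), and the control-limit structure (Theorems \ref{thm:Monotone-S-KL} and \ref{thm:Monotone-S-IM}) gives \(\zeta_{\textup{scrap}}^2(k)\le\zeta_{\textup{scrap}}^1(k)\).

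Part (a) is the crux and is where the hypothesis \(p_1^\ast(\tilde s|\tilde s,k)=0\), \(\tilde s=\zeta_{\textup{rm}}^1(k)-1\), is used. If \(k\ge k_2^\ast\), then by (c) level \(k\) is in the scrap regime for \(\Lambda_2\) and the inequality holds vacuously; assume therefore \(k<k_2^\ast\) and \(\tilde s\ge 0\) (otherwise the claim is trivial). Since \(k<k_1^\ast\), the control-limit structure makes remanufacturing optimal in \(\Lambda_1\) at every \(s'\ge\zeta_{\textup{rm}}^1(k)\), so \(V_1(s',k)=-c_r+\beta V_1(0,k+1)\) there. Every admissible transition row is upper triangular (the ambiguity sets assign no positive probability to scenarios absent from the nominal model), hence \(p_1^\ast(\tilde s|\tilde s,k)=0\) forces \(p_1^\ast(\cdot|\tilde s,k)\) to be supported on \(\{s'>\tilde s\}=\{s'\ge\zeta_{\textup{rm}}^1(k)\}\), where \(V_1(\cdot,k)\) is constant; therefore \(\inf_{\pmb p\in\mathcal{U}_{\tilde sk}^1}\sum_{s'}p(s'|\tilde s,k)V_1(s',k)=-c_r+\beta V_1(0,k+1)\). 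Optimality of ``wait'' at \((\tilde s,k)\) in \(\Lambda_1\) then reads \(r(\tilde s,k)+\beta\bigl(-c_r+\beta V_1(0,k+1)\bigr)\ge -c_r+\beta V_1(0,k+1)\), i.e.\ \(r(\tilde s,k)\ge(1-\beta)\bigl(-c_r+\beta V_1(0,k+1)\bigr)\ge(1-\beta)\bigl(-c_r+\beta V_2(0,k+1)\bigr)\), the last step by \(V_2\le V_1\). Since also \(V_2(s',k)\ge w_2(s',k;1)=-c_r+\beta V_2(0,k+1)\) for every \(s'\), we get \(\inf_{\pmb p\in\mathcal{U}_{\tilde sk}^2}\sum_{s'}p(s'|\tilde s,k)V_2(s',k)\ge -c_r+\beta V_2(0,k+1)\), and combining these bounds, \(w_2(\tilde s,k;0)\ge r(\tilde s,k)+\beta\bigl(-c_r+\beta V_2(0,k+1)\bigr)\ge -c_r+\beta V_2(0,k+1)=w_2(\tilde s,k;1)\). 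Thus waiting is at least as good as remanufacturing for \(\Lambda_2\) at \(\tilde s\), so \(\zeta_{\textup{rm}}^2(k)>\tilde s\), i.e.\ \(\zeta_{\textup{rm}}^1(k)\le\zeta_{\textup{rm}}^2(k)\).

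The main obstacle is the use of \(p_1^\ast(\tilde s|\tilde s,k)=0\) in part (a). Without it, the difference \(\bigl(w_2(\tilde s,k;1)-w_2(\tilde s,k;0)\bigr)-\bigl(w_1(\tilde s,k;1)-w_1(\tilde s,k;0)\bigr)\) is a difference of two non-negative quantities---one from \(V_1(0,k+1)-V_2(0,k+1)\), one from the ordering of the inner infima over \(\mathcal{U}^1\) and \(\mathcal{U}^2\)---with no determinate sign, and the comparison genuinely fails in general; the hypothesis is precisely what collapses the \(\Lambda_1\) inner infimum onto the state-independent remanufacturing value, after which \(V_2\le V_1\) can be propagated through. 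The rest is bookkeeping: checking via (c) that scrapping is never the binding comparison in the regime of (a) and remanufacturing never in the regime of (b), and that control-limit ties are resolved consistently so the weak inequalities in the statement are the correct ones.
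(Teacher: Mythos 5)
Your proof is correct and follows essentially the same route as the paper: both rest on the comparison lemma \(V_1(s,k)\ge V_2(s,k)\) for all \((s,k)\) (which the paper establishes by induction over the robust value iteration and you by operator monotonicity), derive (b) and (c) from it, and use the hypothesis \(p_1^\ast(\tilde s|\tilde s,k)=0\) together with upper-triangularity of the admissible transition rows to collapse the \(\Lambda_1\) inner infimum at \((\tilde s,k)\) onto the state-independent remanufacturing value \(-c_r+\beta V_1(0,k+1)\). The only notable difference is that the paper argues part (a) by contradiction (assuming \(a_2^\ast(\tilde s,k)=1\) and deriving \(\beta X\ge X\) for a quantity \(X\ge 0\)), whereas your direct verification that \(w_2(\tilde s,k;0)\ge w_2(\tilde s,k;1)\) is a touch cleaner and, modulo the same tie-breaking convention both arguments need, avoids the degenerate case \(X=0\) that the paper's contradiction glosses over.
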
	
	
 Suppose that a decision maker has problem $\Lambda_1$ implemented and is aware of its optimal robust policy. Theorem \ref{thm:sensitivity} offers valuable insights on the optimal policy if the decision maker decides to be more conservative by considering a larger ambiguity set. It is worth noting that if the system does not allow self-transition, i.e., $\hat{p}(s|s,k)$ for all $s\in \mathcal{S}, k\in \mathcal{K}$, then Theorem \ref{thm:sensitivity} holds.

\section{Computational Study} \label{sec:computationalStudy}

\subsection{System Model Description}
Real-world turbofan engine operating data acquired from sensors are used to demonstrate our robust remanufacturing planning model. Procuring actual turbofan engine system fault progression data is typically time consuming and expensive. Hence, we use the data simulated using the Commercial Modular Aero-Propulsion System Simulation (C-MAPSS) software \citep{frederick2007user} developed at NASA to demonstrate our robust remanufacturing planning model and examine the performance of the optimal robust remanufacturing policies. The C-MAPSS engine is a 90,000 lb thrust class turbofan engine and has five rotating components: fan, low prpessure compressor(LPC), high pressure compressor(HPC), high pressure turbine(HPT), and low pressure turbine(LPT). The engine diagram in Figure \ref{fig:engine} shows the main elements of the engine model \citep{saxena2008damage}.

\begin{figure}[htbp]
	\centering
	\includegraphics[height=4cm]{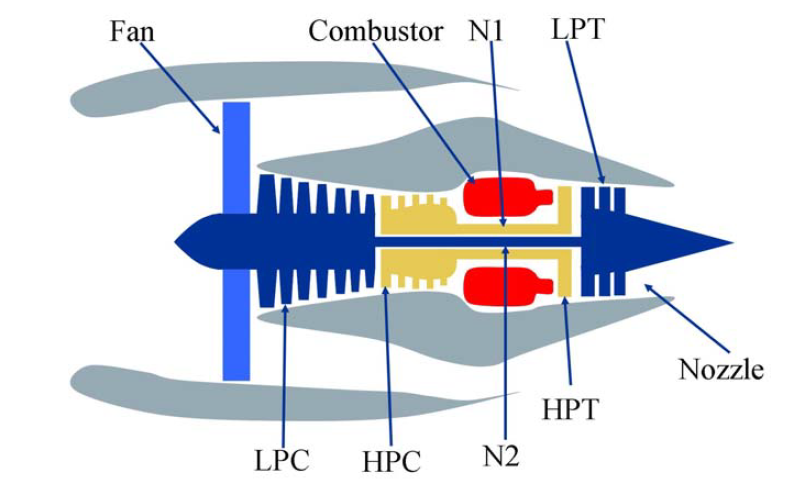}
	\caption{Simplified diagram of engine simulated in C-MAPSS \citep{saxena2008damage}}
	\label{fig:engine}
\end{figure}

The overall simulation is implemented in the MATLAB and Simulink environment, providing flexible interaction with the software user. C-MAPSS  offers  14 inputs and can produce several outputs for analysis. The system inputs include fuel flow, deviation from scheduled variable stator vanes angle, deviation from scheduled variable bleed valve position, and a set of 13 health parameters (e.g., Fan efficiency modifier, LPC flow modifier, HPC pressure-ratio modifier) that consist of flow, efficiency, and pressure-ratio modifiers for the fan, LPC, and HPC, and flow and efficiency modifiers for the HPT and LPT. Advanced users can readily modify and customize the model to their specific requirements to simulate the deterioration in any of the engine's five rotating components. The outputs include various sensor response surfaces and operability margins. The total of 21 outputs are summarized in Table \ref{tbl:CMAPSSOutput}.
\begin{table}[htbp]
	\centering
	\caption{C-MAPSS outputs \citep{saxena2008damage}}
	\begin{tabular}{ccc}
		\hline
		\multicolumn{1}{c}{Symbol} & Description                     & Units       \\ \hline
		T2                          & Total temperature at fan inlet  & $^{\circ}$R \\
		T24                         & Total temperature at LPC outlet & $^{\circ}$R \\
		T30                         & Total temperature at HPC outlet & $^{\circ}$R \\
		T50                         & Total temperature at LPT outlet & $^{\circ}$R \\
		P2                          & Pressure at fan inlet           & psia        \\
		P15                         & Total pressure in bypass-duct   & psia        \\
		P30                         & Total pressure at HPC outlet    & psia        \\
		Nf                          & Physical fan speed              & rpm         \\
		Nc                          & Physical core speed             & rpm         \\
		epr                         & Engine pressure ratio (P50/P2)  & -           \\
		Ps30                        & Static pressure at HPC outlet   & psia        \\
		phi                         & Ratio of fuel flow to Ps30      & pps/psi     \\
		NRf                         & Corrected fan speed             & rpm         \\
		NRc                         & Corrected core speed            & rpm         \\
		BPR                         & Bypass Ratio                    & -           \\
		farB                        & Burner fuel-air ratio           & -           \\
		htBleed                     & Bleed Enthalpy                  & -           \\
		Nf\_dmd                     & Demanded fan speed              & rpm         \\
		PCNfR\_dm                   & Demanded corrected fan speed    & rpm         \\
		W31                         & HPT coolant bleed               & lbm/s       \\
		W32                         & LPT coolant bleed               &   lbm/s          \\ \hline
	\end{tabular}
	\label{tbl:CMAPSSOutput}
\end{table}

\subsection{Dataset Description}
We consider the data pertaining to a single failure mode and a single operating condition. The dataset considered in this work consists of 100  units which are run to failure. Note that end-of-life can be subjectively determined as a function of operational thresholds that can be measured; these thresholds depend on user specifications to determine safe operational limits. For illustration purposes, we arbitrarily choose four features and plot the time series of these features for a randomly selected unit and all units (Figure \ref{fig:unitAllFeatures}).

\begin{figure}[htbp]
	\centering
		\subcaptionbox{Feature 7 \label{fig:unit6Feature01}}[0.24\linewidth]
	{
		\includegraphics[width=1\linewidth]{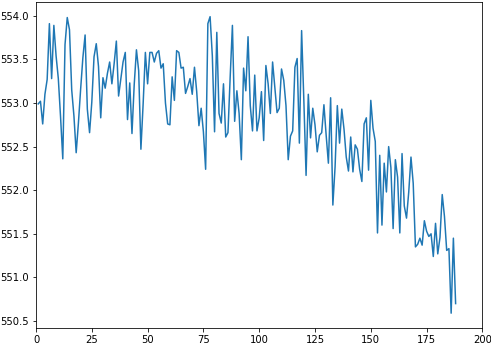}
	}
	\subcaptionbox{Feature 9 \label{fig:unit6Feature02}}[0.24\linewidth]
	{
		\includegraphics[width=1\linewidth]{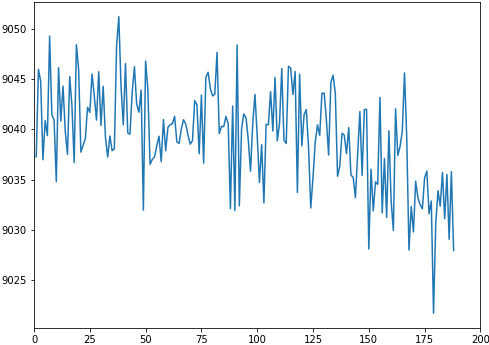}
	}
	\subcaptionbox{Feature 11 \label{fig:unit6Feature03}}[0.24\linewidth]
	{
		\includegraphics[width=1\linewidth]{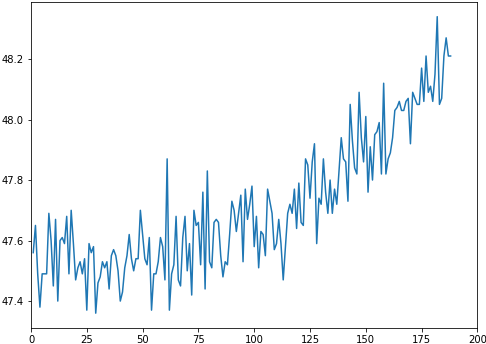}
	}
	\subcaptionbox{Feature 13 \label{fig:unit6Feature04}}[0.24\linewidth]
	{
		\includegraphics[width=1\linewidth]{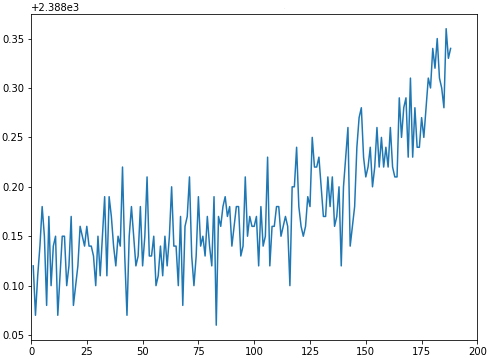}
	}
	\subcaptionbox{Feature 7 \label{fig:unitAllFeature01}}[0.24\linewidth]
	{
		\includegraphics[width=1\linewidth]{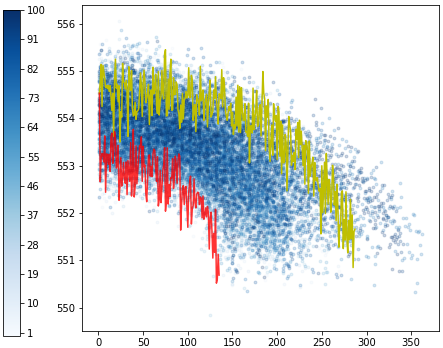}
	}
	\subcaptionbox{Feature 9 \label{fig:unitAllFeature02}}[0.24\linewidth]
	{
		\includegraphics[width=1\linewidth]{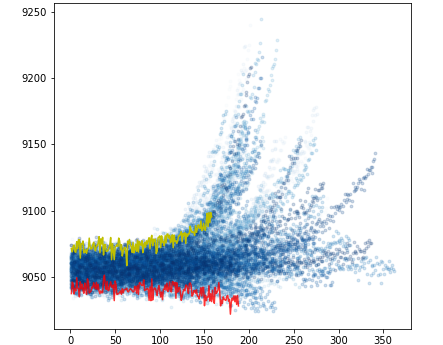}
	}
	\subcaptionbox{Feature 11 \label{fig:unitAllFeature03}}[0.24\linewidth]
	{
		\includegraphics[width=1\linewidth]{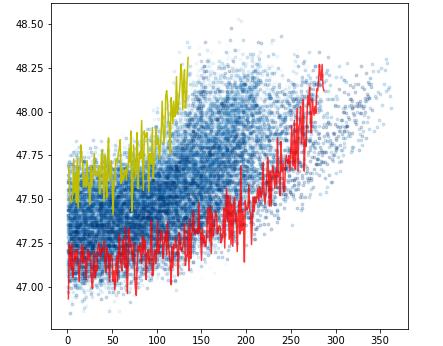}
	}
	\subcaptionbox{Feature 13 \label{fig:unitAllFeature04}}[0.24\linewidth]
	{
		\includegraphics[width=1\linewidth]{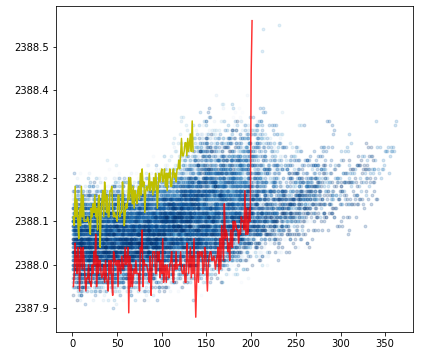}
	}
	\caption{Illustrations of raw sensor data sequences. (a)-(d),  time series of the selected features of unit 6. (e)-(h), time series of the selected features of all units. Solid lines are the time series of the unit that has the most maximum (yellow line) and minimum (red line) points. }
	\label{fig:unitAllFeatures}
\end{figure}

From Figure \ref{fig:unitAllFeatures}, we can see that the data contains a lot of noises. Various sources can contribute to noises, and the main sources of noise are manufacturing and assembly variations, process noise, and measurement noise to name a few important ones \citep{saxena2008damage}. Due to the large amount of noises and limited real-world operational data available, there often exists a high level of uncertainties in transition probabilities of the turbofan engines, and operators and manufacturers are in great need of robust remanufacturing planning.

\subsection{Parameter Estimation}
It is typically desirable to reduce the dimensionality of the data and reconstruct them from a lower dimensional samples. We therefore use the principal component analysis  method to compress the high-dimensional sensor outputs and use the first principle component that accounts for the largest variability of data as the system health indicator. We further discretize the obtained health indicator into 7 intervals, representing 7 condition states, as recommended by \cite{moghaddass2014integrated}.  The \(k\)-means method is used to partition health indicators. The nominal transition probability is estimated using the maximum likelihood method, i.e., \(\hat{p}(s'|s) =  \sum_{i=1}^m n_i(s'|s)/\sum_{i=1}^m \sum_{s'\in\mathcal{S}}n_i(s'|s)\), where \(n_i(s'|s)\) is the number of transitions from state \(s\) to \(s'\) for unit \(i\), and \(m\) is the total number of units in a sample. We construct the ambiguity sets as described in Section \ref{sec:construction}. Note that when constructing interval-matrix-based ambiguity sets, 30 bootstrap samples are used. Based on discussions with researchers and field engineers, an engine typically lose about 7\% useful life each time it is remanufactured. We modify the nominal transition probability matrix obtained for new turbonfan engines (i.e., \(k=0\)) to reflect such a loss for \(k> 0\). For all the following experiments, the nominal transitional probability matrices satisfy Assumption \ref{asm:transition}, and the lower and upper bounds of transition probabilities satisfy conditions \eqref{lowerboundConstraints-S} to \eqref{upperboundConstraints-K}.

\subsection{Experiments}
Next, we  demonstrate the structure of the optimal robust remanufacturing policy and examine the out-of-sample performance of the optimal robust policies. The following cost data is used for all experiments in this section: $g(s,k)=4-0.25s-0.25k, e(s,k)=1+0.25s+0.25k, c_r=2,$ and $c_s=0.5$. The discount factor $\beta$ is 0.9 for all following experiments.

\subsubsection{Policy Structures}\label{sec:violationTest}
We have established conditions to ensure control-limit policies for Model \eqref{Bellman} with ambiguity sets constructed using the KL distance or the interval matrix method. For illustration purposes, we show the structure of optimal robust policies for Model \eqref{Bellman} with KL-distance-based ambiguity sets.

As Figure \ref{fig:policyTheta} shows, the remanufacturing policies exhibit control-limit structure. We can also see that as \(\theta\) increases, the remanufacturing threshold \(\zeta_\text{rm}(k)\) increases and \(k^\ast\) decreases  (i.e., the  scrap is performed earlier). This implies that when  parameter uncertainty is large, a decision maker needs to be cautious about remanufacturing  used products and to consider scrapping at an earlier stage. This is because (1) the remanufacturing cost may not be offset by the subsequent operational gains due to large parameter uncertainties and (2) securing the fixed salvage value better hedges against uncertainties in future gains. 

\begin{figure}[htbp]
	\centering
	\subcaptionbox{\(\theta = 0\) \label{fig:policyTheta0}}[0.3\linewidth]
	{
		\includegraphics[width=1\linewidth]{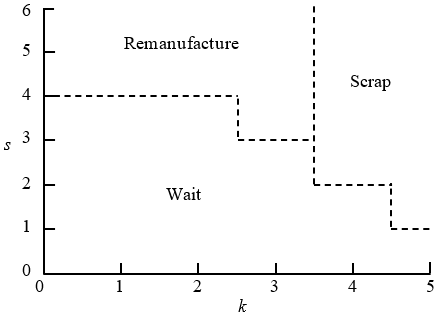}
	}
	\subcaptionbox{\(\theta = 0.5\) \label{fig:policyTheta0.3}}[0.3\linewidth]
	{
		\includegraphics[width=1\linewidth]{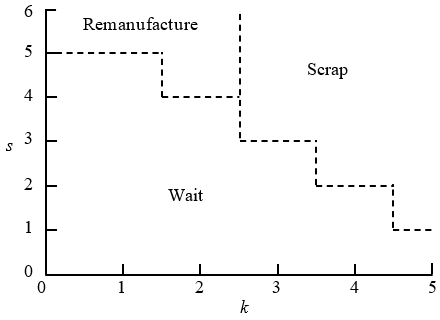}
	}	
	\subcaptionbox{\(\theta = 1\) \label{fig:policyTheta2.3}}[0.3\linewidth]
	{
		\includegraphics[width=1\linewidth]{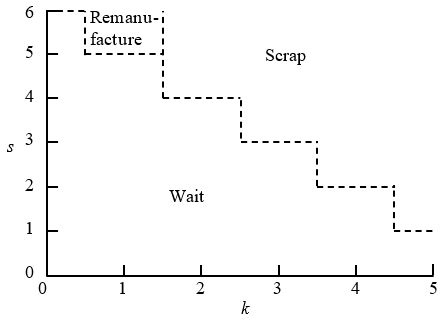}
	}
	\caption{Optimal robust policies for different \(\theta s\)}
	\label{fig:policyTheta}
\end{figure}

As stated earlier, the condition of Theorem \ref{thm:Monotone-K-KL}(a), which is the same as the condition of Theorem \ref{thm:Monotone-K-IM},  is restrictive and difficult to satisfy. We  further examine whether the optimal robust policies are still of control-limit type when this condition is violated. We  test a total of 5000 instances and the generation of the test instances is described in  Appendix B.1.  Out of the 3060 test instances that violate the condition of Theorem \ref{thm:Monotone-K-KL}(a), only  209  (i.e., approximately 6.8\%) instances violate the monotone structure. Therefore, we  believe that a control-limit policy with respect to \(k\) can be obtained in most practical cases even when the condition that guarantees it is violated.

\subsubsection{Impact of the Parameter Uncertianty }
We first conduct experiments to investigate the impact of the parameter uncertainty on the out-of-sample performance.  The radius \(\theta\) determines the size of the KL-distance-based ambiguity set and the confidence level \(\alpha\) determines the size of the ambiguity set constructed using bootstrap resampling. For notational convenience, we use \(\psi\) to denote the hyperparameter that controls the size of the ambiguity set.

We use a training dataset  \(\mathcal{N}\). The optimal robust policies of Model \eqref{Bellman} with ambiguity sets constructed under different hyperparameter values using the training dataset, \(\pi_\mathcal{N}(\psi)\), are then implemented in a  test dataset   \(\mathcal{M}\) to assess the out-of-sample performance. We examine two  performance measurements: the average reward and the reliability of performance guarantees. The average reward is defined as  \(\bar{\nu}_\mathcal{N}(\psi) = \sum_{i\in\mathcal{M}}\nu_i(\pi_\mathcal{N}(\psi))/|\mathcal{M}|\), where \(\nu_i(\pi_\mathcal{N}(\psi))\) is the expected reward of robust policy \(\pi_\mathcal{N}(\psi)\) for test sample \(i\in \mathcal{M}\) when the system is brand new (\(s=0,k=0\)). The reliability is defined as the probability of the event \(\bar{\nu}_\mathcal{N}(\psi)\ge V_\mathcal{N}(\psi)\), where \(V_\mathcal{N}(\psi)\) is the in-sample value of \(V(0,0)\) under  \(\psi\).

Figure \ref{fig:withoutValidPerformance} depicts the experiment results when the sizes of training dataset is 5 (\(|\mathcal{N}| = 5\)) and size of the test dataset is 50 (\(|\mathcal{M}| = 50\)).
From Figure \ref{fig:withoutValidPerformance}(\subref{fig:withoutValidRewardKL}), we observe that the average reward of the robust policy is slightly higher than that of the nominal policy when \(\theta\) is not too large. As   \(\theta\) keeps increasing, the average reward of the robust policy deteriorates because the robust policy is too conservative.  The empirical reliability visualized in Figure \ref{fig:withoutValidPerformance}(\subref{fig:withoutValidReliabilityKL})  is in general non-decreasing in  \(\theta\), and the reliability of the performance guarantee under the robust approach is much higher than that under the nominal approach.   We also find that the out-of-sample average reward using a robust approach is better as long as the reliability of the performance guarantee is noticeably smaller than 1 and deteriorates when it is close to 1. Figure \ref{fig:withoutValidPerformance}(c) and (d) present the out-of-sample performance and the reliability of Model \eqref{Bellman} with the interval-matrix-based ambiguity sets, respectively. Similar patterns are observed. Results of this experiment provide an empirical justification of adopting a robust remanufacturing approach, especially when the size of the dataset is small.

\begin{figure}[htbp]
	\centering
	\subcaptionbox{ \label{fig:withoutValidRewardKL}}[0.35\linewidth]
	{
		\includegraphics[width=1\linewidth]{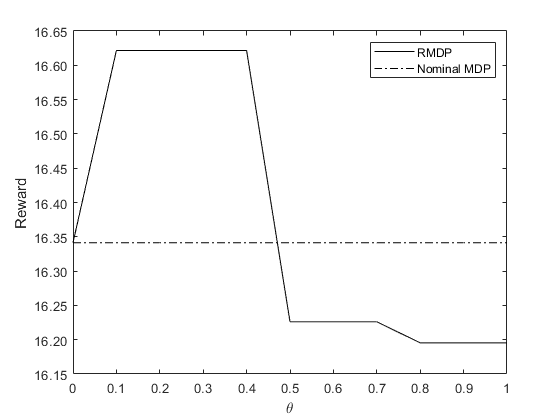}
	}
	\subcaptionbox{\label{fig:withoutValidReliabilityKL}}[0.35\linewidth]
	{
		\includegraphics[width=1\linewidth]{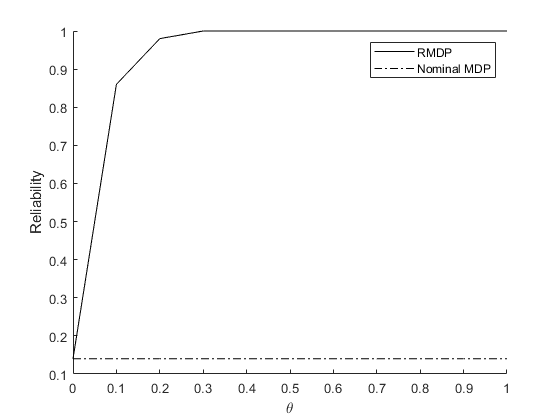}
	}
	
	\subcaptionbox{
		\label{fig:withoutValidRewardIM}}[0.35\linewidth]
	{
		\includegraphics[width=1\linewidth]{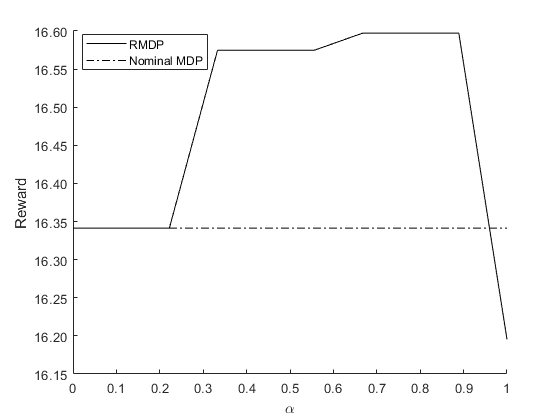}
	}
	\subcaptionbox{ \label{fig:withoutValidReliabilityIM}}[0.35\linewidth]
	{
		\includegraphics[width=1\linewidth]{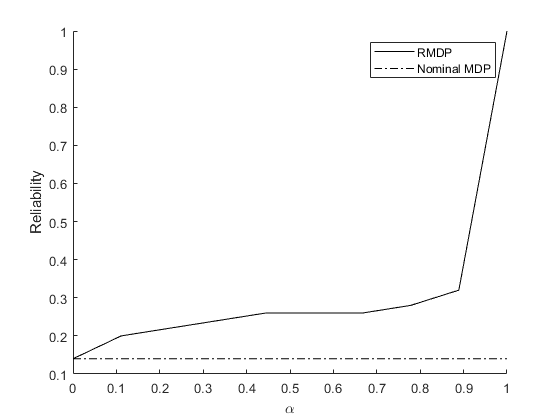}
	}
	
	\caption{Out-of-sample reward \(\bar{\nu}_\mathcal{N}(\psi)\) and reliability \(\text{Pr}\{\bar{\nu}_\mathcal{N}(\psi) \geq V_\mathcal{N}(\psi)\}\) as a function of \(\psi\).    (a)-(b)  KL-distance-based ambiguity set. (c)-(d)  Interval-matrix-model-based ambiguity set.}
	\label{fig:withoutValidPerformance}
\end{figure}

\subsubsection{Remanufacturing Planning Driven by Out-of-Sample Performance}

From the previous experiment on the impact of the parameter uncertainty, it is shown that different hyperparameter \(\psi\) values may lead to robust remanufacturing policies with different out-of-sample performance \(\bar{\nu}_\mathcal{N}(\psi)\). It is desired to select a \(\psi\)   that maximizes the  average award \(\bar{\nu}_\mathcal{N}(\psi)\). This, however, requires the true transition probability that is not precisely known. We select the optimal \(\psi\) via validation using the training data. Specifically, we randomly select 60\% of the training dataset \(\mathcal{N}\) for training and the remaining 40\% of the training data is used for validation. Using newly formed training dataset to construct the ambiguity sets,  solve  Model \eqref{Bellman} for a finite number of candidate hyperparameter \(\psi\). Use the validation dataset to evaluate the out-of-sample performance of $\pi_{\mathcal{N}}(\psi)$, select the optimal $\psi^\ast$ as the one that maximizes $\bar{\nu}_\mathcal{N}(\psi)$ of the validation set,  and report $\pi_{\mathcal{N}}(\psi^\ast)$ as the data-driven solution.


Figure \ref{fig:validPerformance}(a) shows the mean value of the out-of-sample performance \(\bar{\nu}_\mathcal{N}(\psi^\ast)\) as a function of the sample size \(|\mathcal{N}|\).  We also observe that both out-of-sample and in-sample performances exhibit asymptotic consistency. Figure \ref{fig:validPerformance}(b) shows the mean of the reliability of the guaranteed performance under different sample sizes. We can see that the robust policy significantly outperforms the nominal one, particularly when the training data is scarce. As more data become available, the optimal robust policy converges to the nominal policy, and so does the performance of the robust  policy.  Figure \ref{fig:validPerformance}(c) reports in-sample estimate \(V_{\mathcal{N}}(\psi)\). We can see that the nominal approach is over-optimistic while the robust approaches act on the cautious side. 


\begin{figure}[htbp]
	\centering
	\subcaptionbox{ \label{fig:validRewardKL}}[0.3\linewidth]
	{
		\includegraphics[width=1\linewidth]{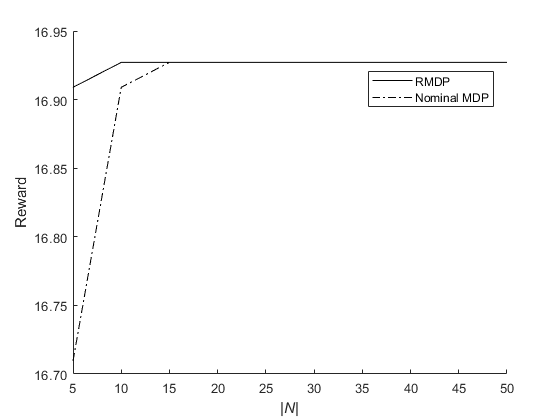}
	}
	\subcaptionbox{ \label{fig:validReliabilityKL}}[0.3\linewidth]
	{
		\includegraphics[width=1\linewidth]{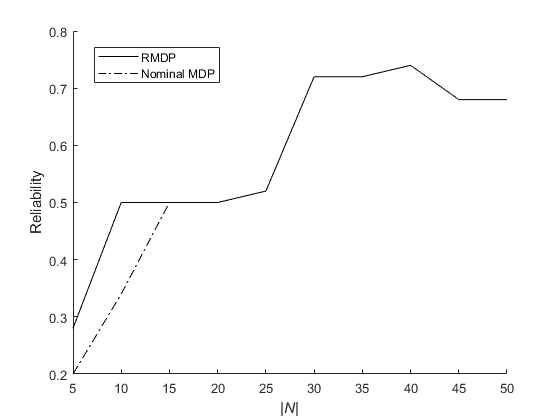}
	}
	\subcaptionbox{ \label{fig:validCerKL}}[0.3\linewidth]
	{
		\includegraphics[width=1\linewidth]{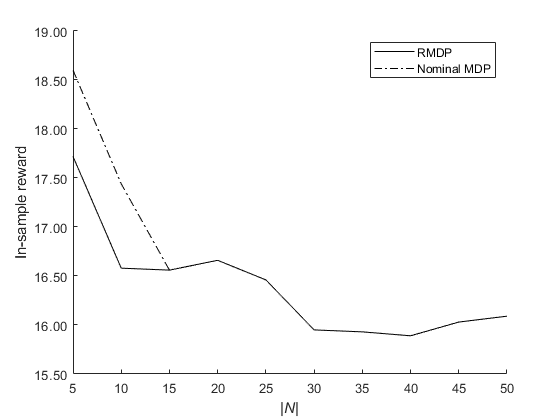}
	}	
	
	\subcaptionbox{ \label{fig:validRewardIM}}[0.3\linewidth]
	{
		\includegraphics[width=1\linewidth]{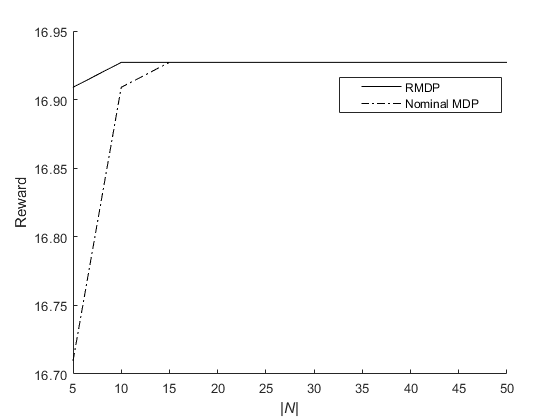}
	}
	\subcaptionbox{ \label{fig:validReliabilityIM}}[0.3\linewidth]
	{
		\includegraphics[width=1\linewidth]{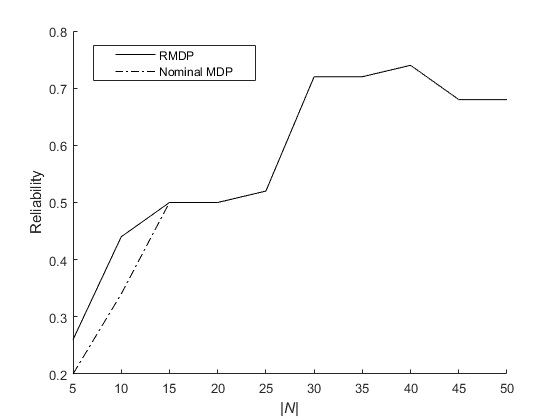}
	}
	\subcaptionbox{ \label{fig:validCerIM}}[0.3\linewidth]
	{
		\includegraphics[width=1\linewidth]{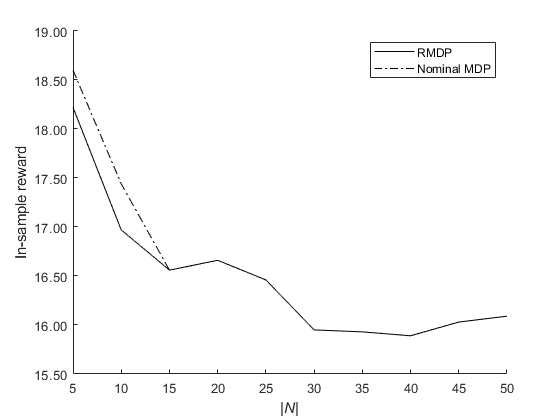}
	}	
	\caption{ Out-of-sample reward \(\bar{\nu}_\mathcal{N}(\psi^\ast)\), reliability \(\text{Pr}\{\bar{\nu}_\mathcal{N}(\psi^\ast) \geq V_\mathcal{N}(\psi^\ast)\}\), and  in-sample reward \(V_\mathcal{N}(\psi^\ast)\) as a function of \(|\mathcal{N}|\).  (a)-(c) KL-distance-based ambiguity set. (d)-(f) Interval-matrix-model-based ambiguity set.}
	\label{fig:validPerformance}
\end{figure}

\subsubsection{Remanufacturing Planning Driven by Reliability}
\begin{figure}[t!]
	\centering
	\subcaptionbox{ \label{fig:validReRewardKL}}[0.3\linewidth]
	{
		\includegraphics[width=1\linewidth]{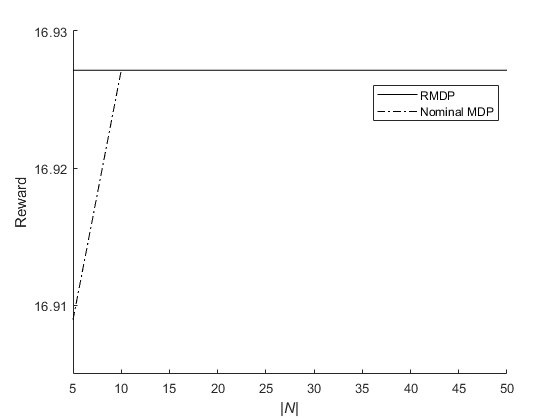}
	}
	\subcaptionbox{ \label{fig:validReReliabilityKL}}[0.3\linewidth]
	{
		\includegraphics[width=1\linewidth]{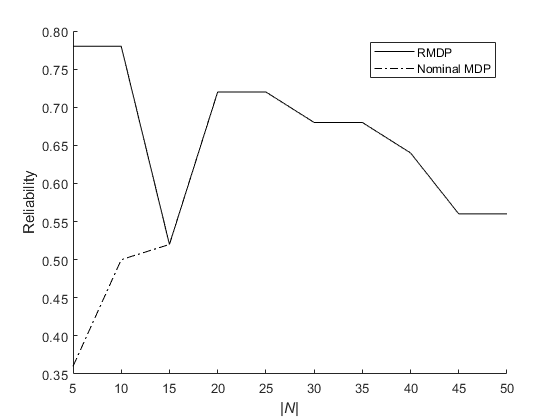}
	}
	\subcaptionbox{ \label{fig:validReCerKL}}[0.3\linewidth]
	{
		\includegraphics[width=1\linewidth]{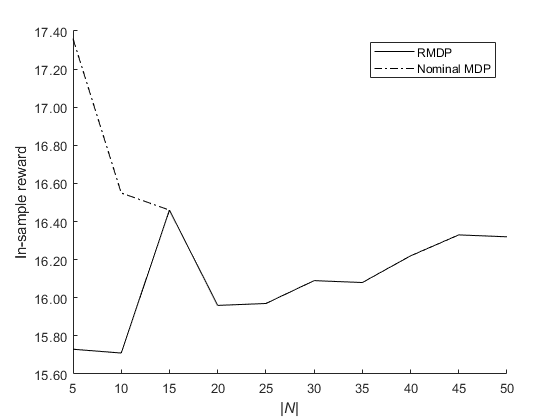}
	}	
	
	\subcaptionbox{ \label{fig:validReRewardIM}}[0.3\linewidth]
	{
		\includegraphics[width=1\linewidth]{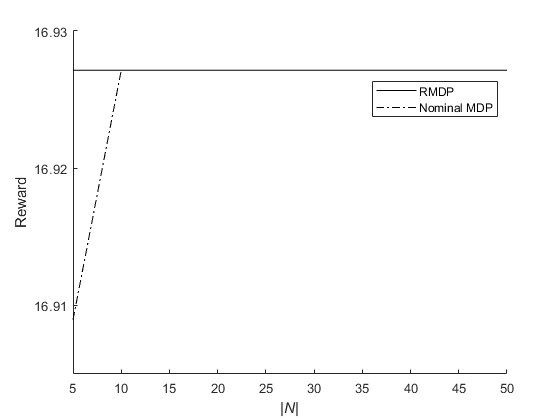}
	}
	\subcaptionbox{ \label{fig:validReReliabilityIM}}[0.3\linewidth]
	{
		\includegraphics[width=1\linewidth]{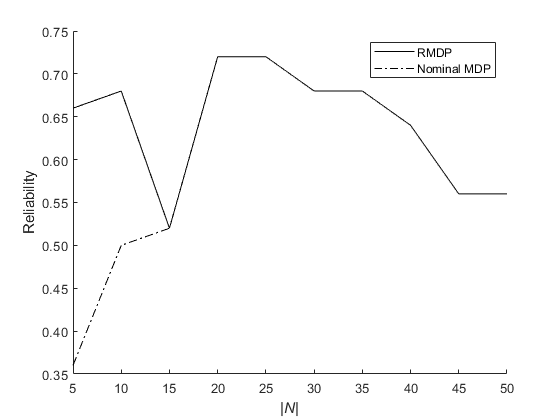}
	}
	\subcaptionbox{ \label{fig:validReCerIM}}[0.3\linewidth]
	{
		\includegraphics[width=1\linewidth]{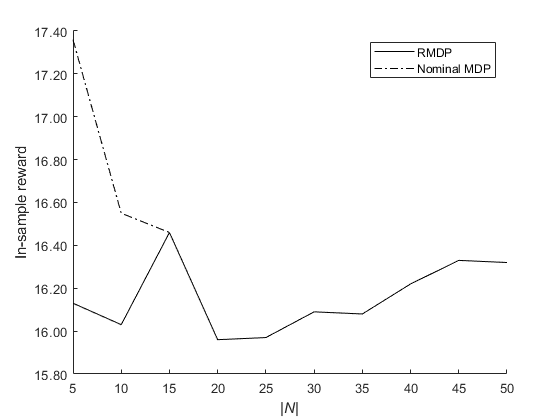}
	}	
	\caption{ Out-of-sample reward \(\bar{\nu}_\mathcal{N}(\psi_\gamma)\), reliability \(\text{Pr}\{\bar{\nu}_\mathcal{N}(\psi_\gamma) \geq V_\mathcal{N}(\psi_\gamma)\}\), and in-sample reward \(V_\mathcal{N}(\psi_\gamma)\) as a function of \(|\mathcal{N}|\) with a reliability guarantee of \(\gamma\).  (a)-(c) KL-distance-based ambiguity set. (d)-(f) Interval-matrix-model-based ambiguity set.}
	\label{fig:validReliability}
\end{figure}

From the previous experiment, we can see that there exists some trade-off between the out-of-sample performance and the reliability of the performance guarantee; reliability may be sacrificed if the optimal hyperparameter \(\psi\) is selected by only maximizing the out-of-sample performance. In the next experiment, we consider an alternative objective that chooses a hyperparameter \(\psi\) that  results in satisfactory out-of-sample performance while ensuring a prescribed reliability level. We use the method described in \cite{Kuhn2018Wasserstein} to find the smallest \(\psi\)  for which a desired reliability level (\(\gamma\)) is guaranteed. An estimator of  \(\psi_{\gamma}\) is constructed via bootstrapping the training data as follows. Construct \(q\) bootstrap samples  (with replacement) from the original training dataset. Solve Model \eqref{Bellman} for a finite number of hyperparameters for each bootstrap sample (\(\mathcal{N}_q\)) and obtain the optimal value \(V_{\mathcal{N}_q}(\psi)\), and then estimate the out-of-sample performance for the corresponding validation dataset. Set \(\psi_\gamma\) to the smallest \(\psi\) that leads to a reliability level of \(\gamma\), that is, the out-of-sample performance \(\bar{\nu}_{\mathcal{N}_q}(\psi_\gamma)\) for the validation sets exceeds the in-sample performance \(V_{\mathcal{N}_q}(\psi_\gamma)\) in at least \(\gamma \times q\) different bootstrap samples. Resolve Model \eqref{Bellman} for \(\psi_\gamma\) and obtain the data-driven remanufacturing policies using the original training dataset \(\mathcal{N}\). The ambiguity sets constructed in this method calibrates their size to guarantee the desired reliability level \(\gamma\).

Figure \ref{fig:validReliability} depicts the result when \(q=30\) and \(\gamma=0.7\). From Figures \ref{fig:validReliability}(\subref{fig:validReReliabilityKL}) and (\subref{fig:validReReliabilityIM}), we can see that the empirical reliabilities obtained from the robust approach are close to the desired reliability target and exceed the prescribed target in many cases.

\section{Conclusion and Future Work}\label{sec:conclusion}
In this paper, we consider the problem of remanufacturing planning in the presence of parameter uncertainty. We formulate the problem as a robust Markov decision process in which the true transition probability is unknown but lies in an ambiguity set with high confidence. Two statistical methods are used to construct the ambiguity set: the KL distance, and  bootstrap resampling. We investigate the structure of the optimal robust policies and establish conditions to ensure the policies are of control-limit type. We also establish sufficient conditions for some of the intuitive results seen in our computational study. In particular, we derive the general decision insights for two systems\textemdash one system's ambiguity set is contained in the other's; we show that when large uncertainty in transition dynamics presents, the decision maker needs to be cautious about remanufacturing a product and should consider early scrapping to hedge against future uncertainties. We demonstrate the structure of the optimal robust policies via a computational study using the simulated operational data of the turbofan engine operated by NASA, investigate the  out-of-sample performance, and derive the data-driven solutions to improve the out-of-sample performance. 

Future extensions of this work will focus on investigating optimal production planning and inventory control policies for remanufacturing that build  on this work. Moreover, at each decision epoch, decision makers make new observation about the system, and an important question that arises is that how  the information that becomes available in the decision process can be leveraged to resolve some ambiguity, so that the optimal robust policies are not overly conservative. In addition, an implicit assumption made in this paper is that the states of a system are directly observable (i.e., the  sensor data reveal the underlying state of the system with certainty). In practice, many systems are not directly observable and the states have to be inferred from signals collected. Future work will investigate the partially observable Markov decision process with parameter uncertainty.

\bibliography{literature}
\section*{Appendix}	
\setcounter{equation}{0}
\renewcommand{\theequation}{A.\arabic{equation}}
\setcounter{lemma}{0}
\renewcommand{\thelemma}{A.\arabic{lemma}}
\setcounter{corollary}{0}
\renewcommand{\thecorollary}{A.\arabic{corollary}}

\subsection*{A.1 Proof of Proposition \ref{prop:reformulation-KL}}
The value function defined in \eqref{Bellman} involves solving an inner problem for any given $s \in \mathcal{S}$ and $k \in \mathcal{K}$ as follows
\begin{align}
w(s,k;0) = &
\min    		   \ r(s,k) + \beta \sum_{s' \in \mathcal{S}}p(s'|s,k)V(s',k) \nonumber \\
\ \text{s.t.}\quad  & \ \sum_{s' \in \mathcal{S}} p(s'|s,k) = 1,\ \sum_{s' \in \mathcal{S}} \log\left( \dfrac{p(s'|s,k)}{\hat{p}(s'|s,k)} \right) p(s'|s,k) \leq \theta \label{KL_Reform} \\
\quad & ~~ p(s'|s,k) \geq 0, s' \in \mathcal{S}. \nonumber 
\end{align}

\noindent
The Lagrangian dual problem of \eqref{KL_Reform} is
\begin{align*}
\max_{\lambda \text{ free}, \mu \ge 0} \ L(\lambda,\mu) ~~ 
\text{s.t.} \ L(\lambda,\mu) = \min_{\pmb{p}(\cdot|s,k) \ge 0} L(\lambda,\mu,\pmb{p}(\cdot|s,k))
\end{align*}

\noindent
where the Lagrangian function is 
\begin{align*}
L(\lambda,\mu,\pmb{p}(\cdot|s,k)) & = r(s,k)+ \beta \sum_{s' \in \mathcal{S}} p(s'|s,k) V(s',k) + \lambda \left(1-\sum_{s' \in \mathcal{S}} p(s'|s,k)\right) \nonumber \\
& \hspace{0.5cm} + \mu \left(\sum_{s' \in \mathcal{S}} p(s'|s,k)  \log\left(\frac{p(s'|s,k)}{\hat{p}(s'|s,k)}\right) - \theta\right) \\
& = r(s,k) + \lambda - \mu \theta + \sum_{s' \in \mathcal{S}} \left( \beta V(s',k) - \lambda + \mu \log\left(\frac{p(s'|s,k)}{\hat{p}(s'|s,k)}\right) \right) p(s'|s,k).
\end{align*}

\noindent
The strong duality holds because $\hat{\pmb{p}}(\cdot|s,k)$ is a strictly feasible solution to the problem \eqref{KL_Reform} and the Slater condition holds. The first order conditions of the Lagrangian function give
\begin{align}
\frac{\partial L(\lambda,\mu,\pmb{p}(\cdot|s,k))}{\partial p(s'|s,k)} & = \beta V(s',k) - \lambda + \mu \log\left(\frac{p(s'|s,k)}{\hat{p}(s'|s,k)}\right) + \mu = 0, \ \forall s' \in \mathcal{S} \nonumber \\
\Rightarrow \hspace{1.5cm} p(s'|s,k) & = \hat{p}(s'|s,k) \exp \left( \dfrac{- \beta V(s',k) + \lambda - \mu}{\mu} \right), \ \forall s' \in \mathcal{S}. \label{eq:KL_pk}
\end{align}

\noindent
By substituting \eqref{eq:KL_pk} into the Lagrangian function, the dual problem becomes
\begin{equation*}
\max_{\lambda \text{ free}, \mu \ge 0} L(\lambda,\mu) = r(s,k) + \lambda - \mu \theta - \exp\left(\dfrac{\lambda-\mu}{\mu} \right) \mu \sum_{s' \in \mathcal{S}} \hat{p}(s'|s,k) \exp(\dfrac{- \beta V(s',k)}{\mu}).
\end{equation*}

\noindent
Again, the first order conditions give
\begin{align}
\frac{\partial L(\lambda,\mu)}{\partial \lambda} & = 1 - \exp\left(\frac{\lambda-\mu}{\mu} \right) \sum_{s' \in \mathcal{S}} \hat{p}(s'|s,k) \exp\left(\frac{- \beta V(s',k)}{\mu}\right) = 0 \nonumber \\
\Rightarrow \hspace{1.5cm} \lambda &= -\mu \log \left( \sum_{s' \in \mathcal{S}}  \hat{p}(s'|s,k) \exp\left( \frac{- \beta V(s',k)}{\mu} \right) \right) + \mu. \label{eq:lambda}
\end{align}

\noindent
The dual problem can be rewritten as
\begin{align*}
\max_{\mu \ge 0} & \ L(\mu) = r(s,k) - \mu \log \left( \sum_{s' \in \mathcal{S}}  \hat{p}(s'|s,k) \exp\left( \frac{- \beta V(s',k)}{\mu} \right) \right) - \mu \theta.
\end{align*} 

\noindent
By combining \eqref{eq:KL_pk} and \eqref{eq:lambda}, we have the worst-case transitional probabilities as
\begin{align*}
p^*(s'|s,k) = \dfrac{\hat{p}(s'|s,k) \exp\left(- \beta V(s',k) / \mu^*_{sk} \right) }{\sum_{s'' \in \mathcal{S}}\hat{p}(s''|s,k) \exp\left(- \beta V(s'',k) / \mu^*_{sk} \right)}, \ \forall s' \in \mathcal{S}.
\end{align*}

\noindent
where $\mu^*_{sk}$ is the optimal solution of the dual problem with given $s$ and $k$.

\subsection*{A.2 Proof of Proosition \ref{prop:Monotone-ValueFunction-KL}}
Let $V^n(s,k)= \max_{a \in \mathcal{A}} w^{n}(s,k;a)$ denote the value function at the $n$th iteration of the robust value iteration algorithm  in Section \ref{sec:robustValueIter}. We will show that $V^n(s,k)$ is non-increasing in $s \in \mathcal{S}$ and $k\in\mathcal{K}$ for any integer $n \ge 0$ by induction. Then, the theorem follows because the robust value iteration algorithm converges to an optimal policy.

We set the initial value as $V^0(s,k)=0$ for all $s\in\mathcal{S}$ and $k\in\mathcal{K}$. First,  we show that \(V(s,k)\) is non-increasing in \(s\in \mathcal{S}\) for all \(k\in \mathcal{K}\). Because \(V^0(s,k) = 0\) for all $s\in\mathcal{S}$, the induction holds at the initial iteration. Assume that $V^n(s,k)$ is non-increasing in $s \in \mathcal{S}$ for $n = 1,\ldots,m-1$. Let $s^\prime, s \in \mathcal{S}$ with $s' > s$ and  \(\mu_{sk}^*\) be the optimal solution of the dual problem \eqref{reformulation-KL-a0} defined in Theorem \ref{prop:reformulation-KL} for any give state \((s,k) \in \mathcal{S}\times \mathcal{K}\). We consider two cases at iteration \(m\). If \(a=0\), we have

\begin{align}
w^{m}(s^\prime,k; 0)& = \max_{\mu > 0} \ r(s^\prime,k) -\mu \log \left( \sum_{s'' \in \mathcal{S}}  \hat{p}(s''|s',k) \exp\left( \frac{- \beta V^{m-1}(s'',k)}{\mu} \right) \right) -\mu\theta \nonumber \\
& = r(s^\prime,k) -\mu_{s'k}^* \log \left( \sum_{s'' \in \mathcal{S}} \hat{p}(s''|s',k) \exp\left( \frac{- \beta V^{m-1}(s'',k)}{\mu_{s'k}^*} \right) \right) -\mu_{s'k}^* \theta \nonumber \\
& \le r(s,k) -\mu_{s'k}^* \log \left( \sum_{s'' \in \mathcal{S}} \hat{p}(s''|s',k) \exp\left( \frac{- \beta V^{m-1}(s'',k)}{\mu_{s'k}^*} \right) \right) -\mu_{s'k}^*\theta \label{eq:mono10}\\
& \le r(s,k) -\mu_{s'k}^* \log \left( \sum_{s'' \in \mathcal{S}} \hat{p}(s''|s,k) \exp\left( \frac{- \beta V^{m-1}(s'',k)}{\mu_{s'k}^*} \right) \right) -\mu_{s'k}^* \theta \label{eq:mono11} \\
&\le \max_{\mu > 0} \ r(s,k) -\mu \log \left( \sum_{s'' \in \mathcal{S}} \hat{p}(s''|s,k) \exp\left( \frac{- \beta V^{m-1}(s'',k)}{\mu} \right) \right) -\mu\theta \nonumber\\
&= w^{m}(s,k; 0) \nonumber
\end{align}

\noindent
The inequality \eqref{eq:mono10} holds because \(r(s',k) \le r(s,k)\). The inequality \eqref{eq:mono11} follows  Lemma 4.7.2 in \cite{puterman2014markov} because \(\pmb{P}(\cdot|\cdot,k)\) is IFR and \(V^{m-1}(s,k)\) is non-increasing in \(s\) given \(k\) by the induction hypothesis.

If \(a = 1\), we have \(w^m(s,k;1)  =w^m(s^\prime,k;1)= -c_\text{r}+\beta V^{m-1}(0,k+1)\). Therefore, \(w^m(s,k;1)\) is non-increasing in \(s\) given \(k\). Similarly, since \(w^m(s,k;2)=w^m(s^\prime,k;2)=c_\text{s}\), \(w^m(s,k;2)\) is also non-increasing in \(s\) given \(k\). Since \(V^m(s,k)=\max_{a\in\mathcal{A}} w^m(s,k;a)\) \(\geq \max_{a\in\mathcal{A}} w^m(s^\prime,k;a) = V^m(s^\prime,k)\), the induction hypothesis holds at iteration \(m\). 

Next,  we show that \(V(s,k)\) is non-increasing in \(k \in \mathcal{K}\) for all  \(s \in \mathcal{S}\). Because \(V^0(s,k) = 0\) for all $k\in\mathcal{K}$, the induction holds at the initial iteration. Assume for any given $s\in\mathcal{S}$, $V^n(s,k)$ is non-increasing in $k \in \mathcal{K}$  for $n = 0,\ldots,m-1$. We consider two cases at iteration \(m\). If \(a=0\), we have

\begin{align}
&\ w^{m}(s,k+1; 0) \nonumber \\
= &\ \max_{\mu > 0} \ r(s,k+1) -\mu \log \left( \sum_{s' \in \mathcal{S}}  \hat{p}(s''|s,k+1) \exp\left( \frac{- \beta V^{m-1}(s',k+1)}{\mu} \right) \right) -\mu\theta \nonumber \\
= &\ r(s,k+1) -\mu_{s,k+1}^* \log \left( \sum_{s' \in \mathcal{S}}  \hat{p}(s'|s,k+1) \exp\left( \frac{- \beta V^{m-1}(s',k+1)}{\mu_{s,k+1}^*} \right) \right) -\mu_{s,k+1}^* \theta \nonumber \\
\le&\ r(s,k) -\mu_{s,k+1}^* \log \left( \sum_{s' \in \mathcal{S}}  \hat{p}(s'|s,k+1) \exp\left( \frac{- \beta  V^{m-1}(s',k)}{\mu_{s,k+1}^*} \right) \right) -\mu_{s,k+1}^*\theta \label{eq:mono20}\\
\le&\ r(s,k) -\mu_{s,k+1}^* \log \left( \sum_{s' \in \mathcal{S}}  \hat{p}(s'|s,k) \exp\left( \frac{- \beta V^{m-1}(s',k)}{\mu_{s,k+1}^*} \right) \right) - \mu_{s,k+1}^*\theta \label{eq:mono21} \\
\le&\ \max_{\mu > 0} \ r(s,k) -\mu \log \left( \sum_{s' \in \mathcal{S}}  \hat{p}(s''|s,k) \exp\left( \frac{- \beta V^{m-1}(s',k)}{\mu} \right) \right) -\mu\theta \nonumber\\
=&\ w^{m}(s,k; 0) \nonumber 
\end{align}

\noindent
The inequality \eqref{eq:mono20} holds because \(r(s,k+1) \le r(s,k)\) and $V^{m-1}(s,k+1) \le V^{m-1}(s,k)$ by the induction hypothesis. The Inequality \eqref{eq:mono21} follows  Lemma 4.7.2 in \cite{puterman2014markov} because \(\pmb{P}(\cdot|\cdot,k+1)\succeq \pmb{P}(\cdot|\cdot,k)\) by Assumption \ref{asm:transition}(b)  and \(V^{m-1}(s,k)\) is non-increasing in $s\in\mathcal{S}$.

If \(a = 1\), we have \(w^m(s,k;1) = -c_\text{r}+\beta V^{m-1}(0,k+1) \ge -c_\text{r}+\beta V^{m-1}(0,k+2)=w^m(s,k+1;1)\). Therefore, \(w^m(s,k;1)\) is non-increasing in \(k\) for all \(s \in \mathcal{S}\). Similarly, since \(w^m(s,k;2)=w^m(s,k+1;2)=c_\text{s}\), \(w^m(s,k;2)\) is also non-increasing in \(k\) for all \(s \in \mathcal{S}\). Since \(V^m(s,k)=\max_{a\in\mathcal{A}} w^m(s,k;a)\) \(\geq \max_{a\in\mathcal{A}} w^m(s,k+1;a) = V^m(s,k+1)\), the induction hypothesis holds at iteration \(m\).

\subsection*{A.3 Proof of Theorem \ref{thm:Monotone-S-KL}}
%

We first show that the optimal policy is of  control-limit type for all \(k \in \mathcal{K}\). Let \(s'>s\). We consider two cases: (\textit{i}) If \(a^\ast(s,k)=1\), then \(V(s,k)=w(s,k;1) = -c_\text{r}+\beta V(0,k+1) = w(s',k;1) \le V(s',k)\). Because \(V(s,k)\) is non-increasing in \(s\) for all \(k \in \mathcal{K}\), \(V(s,k)\ge V(s',k)\). Thus, we have \( V(s^\prime,k)=w(s^\prime,k;1)\) and  \(a^\ast(s^\prime,k) = 1\). (\textit{ii}) If \(a^\ast(s,k)=2\), then \(V(s,k)=w(s,k;2)=c_\text{s}=w(s',k;2)\), and by Theorem \ref{prop:Monotone-ValueFunction-KL}, \(V(s,k)\ge V(s',k)\), we have \(V(s',k)=w(s',k;2)\) and \(a^\ast(s',k)=2\).

Next, we show the existence of the threshold \(k^\ast\). This is equivalent to show that if \(a^\ast(s,k) = 2\) for some \(k\), then \(a^\ast(s,k+1) = 2\). Since \(V(s,k)=w(s,k;2)=c_\text{s}=w(s,k+1;2)\le V(s,k+1)\) and \(V(s,k)\ge V(s,k+1)\), we have \(V(s,k+1)=w(s,k+1;2)\) and hence \(a^\ast(s,k+1) = 2\).

\subsection*{A.4 Proof of Theorem \ref{thm:Monotone-K-KL}}

We first prove that \(\zeta_{\text{rm}}(k)\) is non-increasing in \(k,\ \forall k \in \{0,\ldots, k^*-1\}\). This is equivalent to show that \(a^\ast(s,k+1) = 1\) if \(a^\ast(s,k) = 1\ \forall k \in \{0,\ldots, k^*-2\}\). We prove this by contradiction.
Suppose \(a^\ast(s,k)=1\) but \(a^\ast(s,k+1)=0\) for some \(s \in \mathcal{S}\) and \(k \in \{0,\ldots, k^*-2\}\). Then, we have \(w(s,k;1) \geq w(s,k;0)\), \(w(s,k+1;1) < w(s,k+1;0)\) and hence,
\begin{equation}
\label{eg:mono-k-kl-1}
w(s,k;1) - w(s,k+1;1) > w(s,k;0) - w(s,k+1;0).
\end{equation}

\noindent
The right hand side (RHS) of Equation \eqref{eg:mono-k-kl-1}  be rewritten as 
\begin{align}
\textnormal{RHS} &= r(s,k) + \max_{\mu> 0}\left(-\mu\log\left(\sum_{s'\in\mathcal{S}}\hat{p}(s'|s,k) \exp\left(\dfrac{-\beta V(s',k)}{\mu}\right)\right) -\mu\theta\right) \nonumber\\ 
& \hspace{.5cm} - r(s,k+1) - \max_{\mu> 0}\left(-\mu\log\left(\sum_{s'\in\mathcal{S}}\hat{p}(s'|s,k+1) \exp\left(\dfrac{-\beta V(s',k+1)}{\mu}\right)\right) -\mu\theta\right) \nonumber\\
&\geq r(s,k) + \left(-\mu_{s,k+1}^* \log\left(\sum_{s'\in\mathcal{S}}\hat{p}(s'|s,k) \exp\left(\dfrac{-\beta V(s',k)}{\mu_{s,k+1}^*}\right)\right) -\mu_{s,k+1}^*\theta\right) \nonumber\\ 
& \hspace{.5cm} - r(s,k+1) - \left(-\mu_{s,k+1}^*\log\left(\sum_{s'\in\mathcal{S}}\hat{p}(s'|s,k+1) \exp\left(\dfrac{-\beta V(s',k+1)}{\mu_{s,k+1}^*}\right)\right) -\mu_{s,k+1}^*\theta\right) \nonumber \\
&\geq  r(s,k) - r(s,k+1), \label{eq:mono-k-kl-2}
\end{align}

\noindent
where inequality \eqref{eq:mono-k-kl-2} follows Lemma 4.7.2 in \cite{puterman2014markov} because \(V(s,k)\) is non-increasing in \(k\in \mathcal{K}\) and \(\hat{p}(\cdot|\cdot,k+1) \succeq \hat{p}(\cdot|\cdot,k)\) in Assumption \ref{asm:transition}. And the left hand side (LHS) of Equation \eqref{eg:mono-k-kl-1}  be rewritten as 
\begin{equation}
\textnormal{LHS} = -c_\text{r} +\beta V(0,k+1) + c_\text{r} -\beta V(0,k+2) \leq \beta V(0,k+1)-\beta c_\text{s} \leq   \frac{\beta r(0,0)}{1-\beta} - \beta c_\text{s}, \label{eq:mono-k-kl-3}
\end{equation}

\noindent
where the first inequality holds because \(V(0,k+2) \geq w(s,k+2;2)=c_\text{s}\), and the second inequality holds because \(V(0,k+1)\leq \sum_{t=0}^{\infty}\beta^t r(0,0) = r(0,0)/ (1-\beta)\). By \eqref{eq:mono-k-kl-2} and \eqref{eq:mono-k-kl-3}, we have \(\beta r(0,0)/(1-\beta) - \beta c_\text{s} \geq r(s,k)-r(s,k+1)\), which violates condition in Theorem \ref{thm:Monotone-K-KL}(a) and implies that \(a^\ast(s,k+1) = 1\) if \(a^\ast(s,k) = 1\).

It is straightforward that \(\zeta_{\text{scrap}}(k)\) is non-increasing in \(k\in\mathcal{K}\) because \(a^\ast(s,k+1)=2\) if \(a^\ast(s,k)=2\) as shown in the proof of Theorem \ref{thm:Monotone-S-KL}.

\subsection*{A.5 Proof of Proposition \ref{prop:Monotone-ValueFunction-S-IM}}
Before proving our main results, we first present a lemma  that identifies the worst distribution of the following problem:
\begin{align}
\min_{\pmb p(\cdot|s)} & \ \sum_{s'\in\mathcal{S}}p(s'|s)\nu(s') \label{A1_eq01} \\
\ \text{s.t.} & \ \sum_{s'\in\mathcal{S}}p(s'|s)=1 \nonumber \\
& \ \underline{p}(s'|s) \leq p(s'|s) \leq \bar{p}(s'|s), \ \forall s' \in\mathcal{S}  \nonumber
\end{align}
with given \(s\in\mathcal{S}\), where \(\underline{\pmb p}(\cdot|s)\) and \(\bar{\pmb p}(\cdot|s)\) are effective lower and upper bounds defined by equations \eqref{upperbound} and \eqref{lowerbound}.

\begin{lemma}
	\label{lemma:lemmaA01}
	If \(\nu(s)\) is non-increasing in \(s\in\mathcal{S}\), then  the optimal solution of the problem (\ref{A1_eq01}) (i.e., the worst distribution) \(\pmb p^\ast(\cdot|s)\) is as follows:
	\begin{equation}
	\label{lemmaA01_condition01}
	p^\ast(s'|s) =
	\left\{
	\begin{array}{ll}
	\bar{p}(s'|s) , & s' > \delta_s\\
	\underline{p}(s'|s), & s' < \delta_s, \\
	1-\sum_{s''=0}^{s^\ast-1}\underline{p}(s''|s) -\sum_{s''=s^\ast+1}^{S}\bar{p}(s''|s)  , & s' = \delta_s,\\
	\end{array}
	\right.
	\end{equation} 
	where \(\delta_s=\min\big\{\delta\in\mathcal{S}:\sum_{s'=0}^{\delta}\underline{p}(s'|s)+\sum_{s'=\delta+1}^{S}\bar{p}(s'|s) \leq 1\big\}\).
\end{lemma}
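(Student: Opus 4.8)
The plan is to check that the candidate row $\pmb p^*(\cdot|s)$ in \eqref{lemmaA01_condition01} is (i) well-defined and feasible for problem \eqref{A1_eq01}, and (ii) optimal, the latter via a summation-by-parts comparison against an arbitrary feasible point that exploits the monotonicity of $\nu$.

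First I would handle well-definedness of the threshold. Set $f(\delta)=\sum_{s'=0}^{\delta}\underline{p}(s'|s)+\sum_{s'=\delta+1}^{S}\bar{p}(s'|s)$. Since $f(\delta)-f(\delta-1)=\underline{p}(\delta|s)-\bar{p}(\delta|s)\le 0$, the map $\delta\mapsto f(\delta)$ is non-increasing, and nonemptiness of the feasible set forces $f(S)=\sum_{s'\in\mathcal{S}}\underline{p}(s'|s)\le 1$; hence $\delta_s=\min\{\delta\in\mathcal{S}:f(\delta)\le 1\}$ exists. Feasibility of the residual coordinate then follows directly from the definition of $\delta_s$: the bound $p^*(\delta_s|s)\ge\underline{p}(\delta_s|s)$ is exactly $f(\delta_s)\le 1$, while $p^*(\delta_s|s)\le\bar{p}(\delta_s|s)$ is exactly the minimality statement $f(\delta_s-1)>1$ (with the boundary case $\delta_s=0$ reducing to the trivial $\sum_{s'\in\mathcal{S}}\bar{p}(s'|s)\ge 1$). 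All other coordinates of $\pmb p^*(\cdot|s)$ equal one of their prescribed bounds, and the residual mass is chosen so that $\sum_{s'\in\mathcal{S}}p^*(s'|s)=1$, so $\pmb p^*(\cdot|s)$ is feasible.

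For optimality, fix any feasible $\pmb p(\cdot|s)$, put $q(s')=p(s'|s)-p^*(s'|s)$, and let $Q(m)=\sum_{s'=0}^{m}q(s')$, so that $Q(S)=0$. Abel summation gives $\sum_{s'\in\mathcal{S}}q(s')\nu(s')=\sum_{m=0}^{S-1}Q(m)\bigl(\nu(m)-\nu(m+1)\bigr)$. Because $\nu$ is non-increasing, each factor $\nu(m)-\nu(m+1)\ge 0$, so it suffices to show $Q(m)\ge 0$ for every $m$, i.e.\ $\sum_{s'=0}^{m}p(s'|s)\ge\sum_{s'=0}^{m}p^*(s'|s)$. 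For $m<\delta_s$ this reads $\sum_{s'=0}^{m}p(s'|s)\ge\sum_{s'=0}^{m}\underline{p}(s'|s)$, which holds coordinatewise; for $m\ge\delta_s$, complementing both sides turns it into $\sum_{s'=m+1}^{S}p(s'|s)\le\sum_{s'=m+1}^{S}\bar{p}(s'|s)$, again coordinatewise. Hence $\sum_{s'\in\mathcal{S}}q(s')\nu(s')\ge 0$, so $\pmb p^*(\cdot|s)$ attains the minimum.

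The only delicate point I anticipate is the feasibility bookkeeping at the index $\delta_s$: one must invoke both $f(\delta_s)\le 1$ and $f(\delta_s-1)>1$ (treating $\delta_s=0$ separately) to pin $p^*(\delta_s|s)$ into $[\underline{p}(\delta_s|s),\bar{p}(\delta_s|s)]$. Once that is in place, the optimality half is essentially the observation that $\pmb p^*(\cdot|s)$ first-order stochastically dominates every feasible row and therefore minimizes the expectation of the non-increasing $\nu$; an alternative argument via extreme points of the polytope would also work but is messier, so I would present the Abel-summation version.
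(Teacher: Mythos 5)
Your proof is correct, and it takes a genuinely different route from the paper's. The paper argues by an exchange (perturbation) argument: supposing an optimal row $\pmb p'(\cdot|s)$ has $p'(i|s)<\bar{p}(i|s)$ for some $i>\delta_s$, it finds a compensating index $j\le\delta_s$ with $p'(j|s)>p^\ast(j|s)$, shifts a mass $\Delta p$ from $j$ to $i$, and uses $\nu(i)\le\nu(j)$ to conclude the shifted row is no worse; iterating this exchange drives any optimal solution toward $\pmb p^\ast(\cdot|s)$. Your argument instead compares $\pmb p^\ast(\cdot|s)$ directly against an arbitrary feasible row via Abel summation, reducing optimality to the partial-sum inequalities $\sum_{s'=0}^{m}p^\ast(s'|s)\le\sum_{s'=0}^{m}p(s'|s)$, i.e.\ to first-order stochastic dominance of $\pmb p^\ast(\cdot|s)$ over every feasible row. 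The two approaches are essentially dual ways of exploiting monotonicity of $\nu$, but yours buys two things the paper's write-up omits: (i) an explicit verification that $\delta_s$ is well defined and that the residual coordinate $p^\ast(\delta_s|s)$ actually lies in $[\underline{p}(\delta_s|s),\bar{p}(\delta_s|s)]$ (using both $f(\delta_s)\le1$ and the minimality $f(\delta_s-1)>1$), and (ii) a one-shot global comparison that avoids the slight awkwardness in the paper's contradiction, which as stated only produces a solution ``no worse'' than the putative optimum and therefore needs an implicit iteration to conclude. The paper's exchange argument is shorter and more intuitive; yours is more self-contained and rigorous. Both are valid.
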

\begin{proof}
	We prove Lemma \ref{lemma:lemmaA01} by introducing a contradiction. Suppose \(\pmb p^\prime(\cdot|s)\) is an optimal solution and there exists an \(i > \delta_s\) such that \(p^\prime(i|s) < p^*(i|s)=\bar{p}(i|s)\). This implies that there exists an \(j \leq \delta_s\) such that \(p^\prime(j|s) > p^\ast(j|s)\). Let \(\Delta p = \min\big\{ p^\ast(i|s)-p^\prime(i|s), p^\prime(j|s)-p^\ast(j|s)\big\}\). We construct a new distribution  \(p^{\prime\prime}(s'|s)\) such that \(p^{\prime\prime}(s'|s)=p^\prime(s'|s)\) for \(s'\in\mathcal{S}\backslash\{ i,j\}\), \(p^{\prime\prime}(i|s)=p^\prime(i|s)+\Delta p\), and \(p^{\prime\prime}(j|s)=p^\prime(j|s)-\Delta p\). It is easy to verify that \(\pmb p^{\prime\prime}(\cdot|s)\) is feasible to problem (\ref{A1_eq01}). Because \(\nu(i)\le \nu(j)\), we have
	\begin{align*}
	\sum_{s'=0}^{S}p^{\prime\prime}(s'|s)\nu(s') &=\sum_{s' \in \mathcal{S}\backslash\{i,j\}}p^{\prime\prime}(s'|s)\nu(s') + p^{\prime\prime}(i|s)\nu(i) +p^{\prime\prime}(j|s)\nu(j)\\
	& = \sum_{s' \in \mathcal{S}\backslash\{i,j\}}p^{\prime}(s'|s)\nu(s') +\big(p^{\prime}(i|s)+\Delta p\big)\nu(i) +\big(p^{\prime}(j|s)-\Delta p\big)\nu(j)\\
	& =\sum_{s'=0}^{S}p^{\prime}(s'|s)\nu(s') + \Delta p\big(\nu(i)-\nu(j)\big)\leq \sum_{s'=0}^{S}p^{\prime}(s'|s)\nu(s').
	\end{align*}
	This means there exists a feasible solution \(\pmb p^{\prime\prime}(\cdot|s)\) that is no worse than \(\pmb p^{\prime}(\cdot|s)\). Thus there exists a contradiction.
\end{proof}

We now prove the monotonicity of the value function. We first show that \(\pmb p^\ast(\cdot|\cdot,k)\) is IFR for any given \(k \in \mathcal{K}\) if \(\underline{p}(s'|s,k)\) and \(\bar{p}(s'|s,k)\) satisfy conditions (\ref{lowerboundConstraints-S}) and (\ref{upperboundConstraints-S}) for all \((s,k)\in\mathcal{S}\times\mathcal{K}\).  Let \(s^\prime \geq s\) and  \(\delta_s\) be the same \(\delta_s\) defined in Lemma \ref{lemma:lemmaA01}.  If \(  i \leq \delta_s\), we have
\begin{equation*}
\sum_{s''=i}^{S} p^\ast(s''|s',k) = 1-\sum_{s''=0}^{i-1} \underline{p}(s''|s',k)\geq 1-\sum_{s''=0}^{i-1} \underline{p}(s''|s,k) \geq 1-\sum_{s''=0}^{i-1} p^\ast(s''|s,k) = \sum_{s''=i}^{S} p^\ast(s''|s,k),
\end{equation*}
where the first inequality is a result of condition (\ref{lowerboundConstraints-S}). If \( i > \delta_s \), we  have
\begin{equation*}
\sum_{s''=i}^{S} p^\ast(s''|s',k) = \sum_{s''=i}^{S} \bar{p}(s''|s',k) \geq \sum_{s''=i}^{S} \bar{p}(s''|s,k) \geq \sum_{s''=i}^{S} p^\ast(s''|s,k),
\end{equation*} 
where the first inequality is a result of condition (\ref{upperboundConstraints-S}). Therefore, the result follows. We can similarly show that   \(\pmb p^\ast(\cdot|\cdot,k+1) \succeq \pmb p^\ast(\cdot|\cdot,k)\) for all \( k \in\mathcal{K}\) if conditions (\ref{lowerboundConstraints-K}) and (\ref{upperboundConstraints-K}) for all \((s,k)\in\mathcal{S}\times\mathcal{K}\) are satisfied.

Having established the structure properties of transition matrices, we next prove part (a) regarding the monotonicity of \(V(s,k)\) with respect to \(s\) for all \(k \in \mathcal{K}\).

The proof is based on the robust value iteration algorithm in \cite{iyengar2005rmdp}. Let \(V^n(s,k)\) be the value function of the state \((s,k)\) at the end of iteration \(n\). We show that \(V^n(s,k)\) is non-increasing in \(s\) for all \(k \in \mathcal{K}\) in every iteration \(n\) and therefore \(V(s,k)\) is non-increasing in \(s\) for all \(k \in \mathcal{K}\) as the algorithm converges.

We prove this by induction. Let the initial values in the robust value iteration algorithm be $V^0(s,k)=0$ for all  $s\in\mathcal{S}$ and \(k\in\mathcal{K}\), then the induction hypothesis holds at the initial iteration. Assume $V^n(s,k)$ is non-increasing in $s \in \mathcal{S}$ for $k\in\mathcal{K}$ for $n = 1,\ldots,m-1$.  Let $s', s \in \mathcal{S}$ with $s' > s$. We first show that \(\pmb p^\ast(\cdot|\cdot,k)\) is IFR for any given \(k \in \mathcal{K}\). At iteration \(m\), if \(a=0\),
\begin{align}
w^m(s,k;0)  &= r(s,k) + \beta \sum_{s''\in\mathcal{S}}p^\ast(s''|s,k)V^{m-1}(s'',k)  \geq r(s^\prime,k) +  \beta \sum_{s''\in\mathcal{S}}p^\ast(s''|s,k)V^{m-1}(s'',k)  \nonumber \\
&  \geq r(s^\prime,k) + \beta \sum_{s''\in\mathcal{S}}p^\ast(s''|s',k) V^{m-1}(s'',k)  \label{eq:monotonicity-KL-s}\\
&= w^m(s^\prime,k;0),\nonumber
\end{align}
where inequality  \eqref{eq:monotonicity-KL-s} follows Lemma 4.7.2 in \cite{puterman2014markov} because \(V^{m-1}(s,k)\) is non-increasing in \(s\) by the induction hypothesis and \(\pmb p^\ast(\cdot|\cdot,k)\) is IFR given \(k\).

If \(a=1\),  \(w^m(s^\prime,k;1)=-c_\text{r}+\beta V^{m-1}(1,k+1)= w^m(s,k;1)\). Therefore, \(w^m(s,k;1)\) is non-increasing in \(s\) for all \(k\in \mathcal{K}\). We can similarly prove that \(w^m(s,k;2)\) is also non-increasing in \(s\) for all \(k\in \mathcal{K}\).

Because \(V^m(s,k)=\max_{a\in\mathcal{A}} w^m(s,k;a)\) \(\geq \max_{a\in\mathcal{A}} w^m(s^\prime,k;a) = V^m(s^\prime,k)\). Therefore, the induction hypothesis holds at iteration \(m\).  

The proof of part (b) is similar to that of part (a), and is omitted.

\subsection*{A.6 Proof of Corollary \ref{cor:worstcaseditr-IM}}
Corollary \ref{cor:worstcaseditr-IM} is a direct result of Lemma \ref{lemma:lemmaA01} and the proof is omitted.

\subsection*{A.7 Proof of Corollary \ref{cor:worstcaseditr-structure-IM}}
Corollary \ref{cor:worstcaseditr-structure-IM} has already been proved in Theorem \ref{thm:Monotone-S-IM} and the proof is omitted.

\subsection*{A.8 Proof of Theorem \ref{thm:Monotone-S-IM}}
The proof is similar to the proof of Theorem \ref{thm:Monotone-S-KL}.

\subsection*{A.9 Proof of Theorem \ref{thm:Monotone-K-IM}}
The proof is similar to the proof of Theorem \ref{thm:Monotone-K-KL}.

\subsection*{A.10 Proof of Theorem \ref{thm:sensitivity}}
Suppose we solve the two problems simultaneously using the robust value iteration algorithm. We first show that starting with a value of 0 for all states in both problems, at the end of each iteration of the algorithm, the value function of \(\Lambda_1\) will be greater than or equal to the value function of \(\Lambda_2\) for each state. Let \(V_i^n(s,k)\) be the value function of the state \((s,k) \in \mathcal{S}\times \mathcal{K}\) of problem \(\Lambda_i\) at the end of iteration \(n\). Let  \(\mathcal{U}_{sk}^i\),  \(\pmb{p}_i(\cdot|s,k)\), and \(\pmb{p}_i^\ast(\cdot|s,k)\) denote the ambiguity set, transition probability, and the worst transition probability for state \((s,k) \in \mathcal{S}\times \mathcal{K}\) of problem \(\Lambda_i\), respectively. 

We prove this by induction. Since \(V_1^0(s,k)=V_2^0=0\) for \((s,k) \in \mathcal{S}\times \mathcal{K}\), the induction holds at the initial iteration. Now, assume that \(V_1^n(s,k)\ge V_2^n(s,k), (s,k) \in \mathcal{S}\times \mathcal{K}\), for \(n=1,\ldots,m-1\). Then we want to show that \(V_1^m(s,k)\ge V_2^m(s,k), (s,k) \in \mathcal{S}\times \mathcal{K}\).  At iteration \(m\), if \(a=0\), we have 

	\begin{align}
	w_1^m(s,k;0) &= \min_{\pmb p_1(\cdot|s,k)\in\mathcal{U}^1_{sk}}r(s,k)+\beta\sum_{s'\in\mathcal{S}}p_1(s'|s,k)V_1^{m-1}(s',k) \nonumber\\
	&\ge \min_{\pmb p_1(\cdot|s,k)\in\mathcal{U}^1_{sk}}r(s,k)+\beta\sum_{s'\in\mathcal{S}}p_1(s'|s,k)V_2^{m-1}(s',k) \label{eq:sensitivity1}\\
	&\ge \min_{\pmb p_2(\cdot|s,k)\in\mathcal{U}^2_{sk}}r(s,k)+\beta\sum_{s'\in\mathcal{S}}p_2(s'|s,k)V_2^{m-1}(s',k) \label{eq:sensitivity2}\\
	&=w_2^m(s,k;0), 
	\end{align}

where  inequality \eqref{eq:sensitivity1} follows the induction hypothesis and the  inequality in \eqref{eq:sensitivity2} follows $\mathcal{U}_{sk}^1\subseteq \mathcal{U}_{sk}^2$. 

If \(a=1\), \(w_1^m(s,k;1) = -c_\text{r}+V_1^{m-1}(s,k) \ge -c_\text{r}+V_2^{m-1}(s,k) = w_2^m(s,k;1)\), since \(V_1^{m-1}(s,k)\ge V_2^{m-1}(s,k)\) for \((s,k) \in \mathcal{S}\times \mathcal{K}\) by the induction assumption.  If \(a=2\), \(w_1^m(s,k;2) =  w_2^m(s,k;2) = c_\text{s}\). Since \(V_1^m(s,k)=\max_{a\in\mathcal{A}} w_1^m(s,k;a) \geq \max_{a\in\mathcal{A}} w_2^m(s,k;a) = V_2^m(s,k)\) for all \((s,k) \in \mathcal{S}\times \mathcal{K}\),
the induction hypothesis holds at iteration \(m\). Because  the value function of \(\Lambda_1\) is always greater than or equal to that of \(\Lambda_2\) at each iteration of the value-iteration algorithm, the optimal value function of \(\Lambda_1\) is greater than or equal to that of \(\Lambda_2\).

Next, we prove part (a) that \(\zeta_{\text{rm}}^1(k)\le \zeta_{\text{rm}}^2(k)\)  for  \(k < k_1^\ast\), where $\tilde{s} = \zeta_{\text{rm}}^1(k)-1$ and \(k_1^\ast\) is the threshold defined in Theorems \ref{thm:Monotone-K-KL} and \ref{thm:Monotone-K-IM} for \(\Lambda_1\).  This is equivalent to show \(a_2^{\ast}(\tilde{s},k) = 0\) if \(a_1^{\ast}(\tilde{s},k) = 0\) and \(\hat{p}(\tilde{s}|\tilde{s},k) = 0\) for all \(k < k_1^{\ast}\). We prove this by introducing a contradiction. Suppose  \(a_2^{\ast}(\tilde{s},k) = 1\) when \(a_1^{\ast}(\tilde{s},k) = 0\). Then we have \(w_1(\tilde{s},k;0) \geq w_1(\tilde{s},k;1)\) and \(w_2(\tilde{s},k;1) \ge w_2(\tilde{s},k;0)\). Thus, we have 
\begin{equation}
\label{eq:contradition-1}
w_1(\tilde{s},k;0) - w_2(\tilde{s},k;0) \geq w_1(\tilde{s},k;1)  - w_2(\tilde{s},k;1).
\end{equation} 
The left-hand-side (LHS) of (\ref{eq:contradition-1}) can be rewritten as
\begin{align}
\textnormal{LHS} &= r(\tilde{s},k)+\beta\sum_{s'\in\mathcal{S}}p_1^*(s'|\tilde{s},k)V_1(s^\prime,k) - r(\tilde{s},k)-\beta\sum_{s'\in\mathcal{S}}p_2^{\ast}(s'|\tilde{s},k)V_2(s',k)\nonumber\\
&= \beta p_1^*(\tilde{s}|\tilde{s},k)V_1(\tilde{s},k) + \beta \sum_{s'\geq \tilde{s}+1}p_1^*(s'|\tilde{s},k)V_1(s',k) -\beta\sum_{s'\in\mathcal{S}}p_2^{\ast}(s'|\tilde{s},k)V_2(\tilde{s},k)\label{eq:contradition-2}\\
&= \beta p_1^*(\tilde{s}|\tilde{s},k)V_1(\tilde{s},k) + \beta  \big(1-p_1^*(\tilde{s}|\tilde{s},k)\big)w_1(\tilde{s},k;1) -\beta w_2(\tilde{s},k;1)\label{eq:contradition-3}\\
&= \beta\big(w_1(\tilde{s},k;1)- w_2(\tilde{s},k;1)\big) \label{eq:contradition-4}, 
\end{align}

\noindent
The equality \eqref{eq:contradition-2} is obtained by simply rearranging terms. The equality \eqref{eq:contradition-3} holds because \(a_2^{\ast}(\tilde{s},k) = 1\) and  \(a_2^{\ast}(s',k) = 1\) for all \(s'\geq \tilde{s}\) following Theorem \ref{thm:Monotone-S-KL}. The equality \eqref{eq:contradition-4} holds because \(p^\ast_1(\tilde{s}|\tilde{s},k)=0\).  By \eqref{eq:contradition-1} and \eqref{eq:contradition-4}, we must have \(\beta\big(w_1(\tilde{s},k;1)- w_2(\tilde{s},k;1)\big) \geq w_1(\tilde{s},k,1)  - w_2(\tilde{s},k,1)\). However, the inequality does not hold since \(\beta < 1\) and leads to a contradiction. Therefore, we have \(a_2^{\ast}(\tilde{s},k) = 0\) if \(a_1^{\ast}(\tilde{s},k) = 0\) and $p_1^*(\tilde{s}|\tilde{s},k) = 0$.

Next, we prove part (b) that \(\zeta_{\text{scrap}}^1(k)\ge \zeta_{\text{scrap}}^2(k)\) for \(k\ge k_1^\ast\). This is equivalent to show \(a_2^{\ast}(s,k)=2\) if \(a_1^{\ast}(s,k)=2\). Because \(V_1(s,k)=w_1(s,k;2) = c_\text{s} = w_2(s,k;2) \le V_2(s,k)\) and \(V_1(s,k)\ge V_2(s,k)\) by the induction, we have  \( V_2(s,k)=w_2(s,k;2)\) and thus  \(a_2^{\ast}(s,k)=2\).

Since \(a_2^{\ast}(s,k)=2\) if \(a_1^{\ast}(s,k)=2\) for  \(k\ge k_1^\ast\), part (c) follows.

\subsection*{B.1 Experiment Parameters}

The following table provides the experiment parameters used in the experiment that examines the existence of control limit policies when the condition of Theorem \ref{thm:Monotone-K-KL}(a) is violated. Note that for easy parameter control, we redefine the reward as \(r(s,k)=a_0 - a_1k-a_2s\). Parameter values are drawn from their respective uniform distributions.

\begin{table}[htbp]
	\centering
	\begin{tabular}{ccccccc}
		\hline
		$a_0$ & $a_1$ & $a_2$  & \(c_r\) & \(c_s\) & \(\theta\) & \(\beta\)      \\ \hline
		$U(10, 50)$ &  $U(1,15)$ &  $U(1,15)$  &  \(U(0,10)\)    & \(U(0,10)\)    & \(U(0,2)\) & \(U(0.01,0.99)\) \\ \hline
	\end{tabular}
\end{table}

\end{document}